%
%
%
%
%
%
\RequirePackage{fix-cm}
\documentclass[smallextended]{svjour3}       
\smartqed  
\usepackage{graphicx}
\usepackage{latexsym,amsmath,amssymb,graphicx}
\usepackage[all,web]{xy}
\usepackage{hyperref}

\setlength{\textheight}{22cm}
\setlength{\textwidth}{13cm}
\setlength{\oddsidemargin}{1.5cm}
\setlength{\evensidemargin}{1.5cm}
\setlength{\topmargin}{0cm}

\def\urlfont{\DeclareFontFamily{OT1}{cmtt}{\hyphenchar\font='057}
             \normalfont\ttfamily \hyphenpenalty=10000}

\DeclareFontFamily{OT1}{rsfs10}{}
\DeclareFontShape{OT1}{rsfs10}{m}{n}{ <-> rsfs10 }{}
\DeclareMathAlphabet{\mathscript}{OT1}{rsfs10}{m}{n}

\DeclareMathOperator{\im}{Im}       
\DeclareMathOperator{\Spec}{Spec}   
\DeclareMathOperator{\Ext}{Ext}     
\DeclareMathOperator{\Exc}{Exc}     
\DeclareMathOperator{\ext}{\mathcal{E}\it{x}\hskip1pt \it{t}\hskip1pt} 
\DeclareMathOperator{\hm}{\mathcal{H}\it{o}\hskip1pt \it{m}\hskip1pt} 
\DeclareMathOperator{\Sing}{Sing}   
\DeclareMathOperator{\Def}{Def}     
\DeclareMathOperator{\dimdef}{def}     
\DeclareMathOperator{\codim}{codim} 

\def\P{{\mathbb{P}}}

\def\p2{\mathbb{P}^2}
\def\p3{\mathbb{P}^3}
\def\p4{\mathbb{P}^4}

\def\su{\operatorname{SU}}

\def\GL{\operatorname{GL}}

\def\C{\mathbb{C}}

\def\T{\mathbb{T}}
\def\gt{T(\widehat{X},X,\widetilde{X})}





%
\newcommand{\oneline}{\vskip12pt}
\newcommand{\halfline}{\vskip6pt}
%

\newcommand{\cy}{Ca\-la\-bi--Yau }
\newcommand{\ka}{K\"{a}hler }

%
%
%
%
%
\begin{document}

\title{A small and non-simple geometric transition
\thanks{The author was partially supported by MIUR-PRIN 2010-11 Research Funds ``Geometria delle Variet\`{a} Algebriche'' and by the I.N.D.A.M. as a member of the G.N.S.A.G.A.}
}

\titlerunning{A small and non-simple geometric transition}        

\author{Michele Rossi}

\authorrunning{Michele Rossi} 

\institute{M. Rossi \at
              Dipartimento di Matematica, Universit\`a di Torino,
via Carlo Alberto 10, 10123 Torino \\
              Tel.: +39-011-6702813\\
              Fax: +39-011-6702878\\
              \email{michele.rossi@unito.it}          }

\date{}

\maketitle

\begin{abstract}
Following notation introduced in the recent paper \cite{Rdef}, this paper is aimed to present in detail an example of a \emph{small} geometric transition which is not a \emph{simple} one i.e. a \emph{deformation} of a conifold transition. This is realized by means of a detailed analysis of the Kuranishi space of a Namikawa cuspidal fiber product, which in particular improves the conclusion of Y.~Namikawa in Remark 2.8 and Example 1.11 of \cite{N}. The physical interest of this example is presenting a geometric transition which can't be immediately explained as a massive black
hole condensation to a massless one, as described by A.~Strominger \cite{Strominger95}.
\keywords{Fiber products of rational elliptic surfaces; smoothing of singularities; resolution of singularities; \cy threefolds; \cy web; geometric transition; conifold transition; deformation of a \cy threefold; deformation of a geometric transition}
\subclass{14B07; 32G10; 14E15; 14J32; 14J27}
\end{abstract}

\tableofcontents

\section{Introduction}
\label{intro}
Let $X$ be a complex projective threefold with terminal
singularities and admitting a \emph{small} resolution
$\widehat{X}\stackrel{\phi}{\longrightarrow}X$ such that
$\widehat{X}$ is a \emph{\cy threefold} (in the sense of Definition \ref{cy-def}), where ``small" means that
the exceptional locus $\Exc (\phi)$ has codimension greater than or
equal to two. Then it is well known that $\Exc (\phi)$ consists of
a finite disjoint union of trees of rational curves of A--D--E
type \cite{Reid83}, \cite{Laufer81}, \cite{Pinkham81},
\cite{Morrison85}, \cite{Friedman86}. In his paper \cite{N},
Remark 2.8, Y.~Namikawa considered the following

\halfline
\noindent\textbf{Problem} \emph{When does $\widehat{X}$ have a flat deformation
such that each tree of rational curves splits up into mutually
disjoint \emph{$(-1,-1)$--curves}?}

Let us recall that a \emph{$(-1,-1)$--curve} is a rational curve
in $X$ whose normal bundle is isomorphic to
$\mathcal{O}_{\P^1}(-1)\oplus\mathcal{O}_{\P^1}(-1)$. It arises
precisely as exceptional locus of the resolution of an ordinary
double point (a \emph{node}) also called a \emph{conifold} point
since it is an isolated hypersurface
singularity whose tangent cone is a non--degenerate quadratic
cone.

Namikawa's problem is interesting in the
context of H.~Clemens type pro\-blems of cycle deformations (see
e.g. \cite{Friedman86}, Corollary (4.11)). Moreover, it is of
significant interest in the context of (deformations of) geometric transitions
and therefore in the study of the moduli space for \cy threefolds. Let us
recall that a \emph{geometric transition} (gt) between two \cy
threefolds is the process obtained by ``composing" a birational
contraction to a normal threefold with a complex smoothing (see
Definition \ref{gtdef}). If the normal intermediate threefold
has only nodal singularities then the considered gt is called a
\emph{conifold transition}. The interest in geometric transitions goes back to the
ideas of H.~Clemens \cite{Clemens} and M.~Reid \cite{Reid87} which
gave rise to the so called \emph{\cy Web Conjecture} (see also
\cite{Gross97b} for a revised and more recent version) stating that
(more or less) all \cy threefolds can be connected to each other by
means of a chain of geometric transitions, giving a sort of (unexpected)
``connectedness" of the moduli space for \cy threefolds. There is also
a considerable physical interest in geometric transitions owing to the fact that
they connect topologically distinct models of \cy vacua: the
physical version of the \cy Web Conjecture is a sort of (in this
case expected) ``uniqueness" of a space--time model for
supersymmetric string theories (see e.g. \cite{CGH90} and
references therein).

In this context, Namikawa's problem can then be rephrased as
follows

\halfline
\noindent\textbf{Problem}(for small geometric transitions) \emph{When does a small gt
have the same ``deformation type" (see Definition \ref{def:def}) of a conifold transition?}

Since the geometry of a general gt can be very intricate, while
the geometry of a conifold transition is relatively easy and well
understood as a topological surgery \cite{Clemens}, the
mathematical interest of such a problem is evident.

On the other hand, conifold transitions were the first (and among
the few) geometric transitions to be physically understood as massive black
holes condensation to massless ones, by A.~Strominger \cite{Strominger95}. Answering
the given problem would then give a significant improvement in
the physical interpretation of (at least the small) geometric transitions
bridging topologically distinct \cy vacua.

\halfline Unfortunately in \cite{N}, Remark 2.8, Namikawa observed
that a flat deformation positively resolving the given problem
``\emph{does not hold in general}" and produced an example of a
\emph{cuspidal fiber self--product of an elliptic rational surface
with sections} whose resolution admits exceptional trees, composed
of couples of rational curves intersecting at one point, which
should not deform to a disjoint union of $(-1,-1)$--curves.
Nevertheless, Example 1.11 in \cite{N}, supporting such a
conclusion, did not give a correct argument since the proposed deformation is actually a trivial one.

In the present paper we will overcome this argument by giving a detailed analysis of the Kuranishi space of the Namikawa cuspidal fiber product, allowing us to go far beyond his conclusion in \cite{N},
Remark 2.8. Moreover this will give rise to an explicit example of a small gt which is not a simple one i.e. it has not the same deformation type of a conifold transition: such an example has already been sketched in \S 9.2 of \cite{Rdef}, without any proof. Here all the needed details will be given.

\halfline The paper is organized as follows.

In Section \ref{sec:1} we introduce notation, preliminaries and main
facts needed throughout the paper. Section \ref{sec:2} is then devoted to present the Namikawa construction of
a fiber self--product of a particular elliptic rational surface
with sections and singular ``cuspidal" fibers (which will be
called \emph{cuspidal fiber product}). These are threefolds
admitting six isolated singularities of Kodaira type $II\times II$
which have been rarely studied in either the pioneering work of
C.~Schoen \cite{S} or the recent \cite{Kap}. For this reason,
their properties, small resolutions and local deformations are
studied in detail. In particular, all the local deformations
induced by global versal deformations are studied in Proposition
\ref{def-singolari}, while all the local deformations of a
cuspidal singularity to three distinct nodes are studied in
Proposition \ref{3sings-effettive}. They actually do not lift globally to
the given small resolution, as stated by Theorem \ref{immagine0},
revising the Namikawa considerations of \cite{N}, Remark 2.8 and
Example 1.11: see Remark~\ref{rem:nodef} and Theorem~\ref{thm:nodef}. The last section \ref{s:gt} is dedicated to apply Theorem \ref{immagine0} to deformations of geometric transitions: for further details the interested reader is referred to \cite{Rdef} \S 7.1.

\section{Preliminaries and notation}
\label{sec:1}
\begin{definition}[\cy 3--folds]\label{cy-def}
A smooth, complex, projective 3--fold $X$ is called \emph{\cy} if
\begin{enumerate}
    \item $\mathcal{K}_X \cong \mathcal{O}_X$\ ,
    \item $h^{1,0}(X) = h^{2,0}(X) = 0$\ .
\end{enumerate}
\end{definition}

The standard example is the smooth quintic threefold in $\P^4$. The given definition is equivalent to require that $X$ has holonomy group a subgroup
    of $\su (3)$ (see \cite{Joice2000} for a complete description of
    equivalences and implications).

\subsection{Deformations of complex spaces}\label{deformazioniCY}
Let us start by recalling that a \emph{complex space} is a ringed space $(X,\mathcal{O}_X)$ where $X$ is a Hausdorff topological space locally isomorphic to a locally closed analytic subset of some $\C^N$ and $\mathcal{O}_X$ is the induced sheaf of holomorphic functions. A \emph{pointed complex space} is a pair $(X,x)$ consisting of a complex space $X$ and a distinguished point $x\in X$. A \emph{morphism $f:(X,x)\to(Y,y)$ of pointed complex spaces} is a morphism $f:X\to Y$ of complex spaces such that $f(x)=y$.

\emph{Complex space germs} are pointed complex spaces whose morphisms are given by equivalence classes of morphisms of pointed complex spaces defined in some open neighborhood of the distinguished point.  Let $(X,x)$ be a complex space germ and $U\subset X$ an open neighborhood of $x$: then the inclusion map $U\hookrightarrow X$ gives an isomorphism of complex space germs $(U,x)\cong(X,x)$. Then $U$ is called \emph{a representative of the germ $(X,x)$}.

\noindent  A complex space germ is also called a \emph{singularity}.

\begin{definition}[Deformation of a complex space, \cite{Palamodov} \S~5, \cite{ACG} \S XI.2] Let $X$ be a complex space. A \emph{deformation of $X$} is a flat, holomorphic map $f:\mathcal{X}\to (B,o)$  from the complex space $\mathcal{X}$ over a complex space germ $(B,o)$, endowed with an isomorphism $X\cong\mathcal{X}_o:=f^{-1}(o)$ on the central fibre, for short denoted by $f:(\mathcal{X}, X)\to (B,o)$. If $X$ is compact then we will also require that $f$ is a proper map. If $X$ is singular and the fibre $\mathcal{X}_b=f^{-1}(b)$ is smooth, for some
$b\in B$, then $\mathcal{X}$ is called \emph{a smoothing family of $X$}.
With a slight abuse of notation, if $b\neq o$ then the fiber $\mathcal{X}_b$ is called either a \emph{deformation} or a \emph{smoothing} of $X$ when $\mathcal{X}$ is a either a deformation or a smoothing family of $X$, respectively.
\end{definition}

\noindent Let $\Omega_{X}$ be the sheaf of holomorphic
differential forms on $X$ and consider the
\emph{Lichtenbaum--Schlessinger cotangent sheaves}
\cite{Lichtenbaum-Schlessinger} of $X$, $\Theta^i_{X} = \ext
^i\left(\Omega_{X},\mathcal{O}_{X}\right)$. Then $\Theta^0_X = \hm
\left(\Omega_X,\mathcal{O}_X\right)=: \Theta_X$ is the ``tangent"
sheaf of $X$ and $\Theta^i_X$ is supported over $\Sing(X)$, for
any $i>0$. Consider the associated \emph{local} and \emph{global
deformation objects}
\[
    T^i_X:= H^0(X,\Theta^i_X)\quad,\quad
    \T^i_X:=\Ext ^i \left(\Omega^1_X,\mathcal{O}_X\right)\ ,\
    i=0,1,2.
\]
Then by the \emph{local to global spectral
    sequence} relating the global $\Ext$ and sheaf $\ext$
    (see \cite{Grothendieck57} and \cite{Godement} II, 7.3.3) we get
    \begin{equation*}
    \xymatrix{E^{p,q}_2=H^p\left(X,\Theta_X^q\right)
               \ar@{=>}[r] & \mathbb{T}_X^{p+q}}
    \end{equation*}
giving that
\begin{eqnarray}
    &&\T^0_X \cong T^0_X \cong H^0(X,\Theta_X)\ , \label{T0} \\
    \label{T-lisci}&&\text{if $X$ is smooth then}\quad \T^i_X\cong H^i(X,\Theta_X)\ ,  \label{T1'}\\
    &&\text{if $X$ is Stein then}\quad T^i_X\cong\T^i_X\ .
    \label{Ti-Ti}
\end{eqnarray}
Given a deformation family
$\mathcal{X}\stackrel{f}{\longrightarrow}B$ of $X$ \emph{for each
point $b\in B$ there is a well defined linear (and functorial) map
}
\[
    D_b f: \xymatrix{T_b B\ar[r]& \T^1_{X_b}}\quad
    \text{(Generalized Kodaira--Spencer map)}
\]
where $T_b B$ denotes the Zariski tangent space to $B$ at $b$ (\cite{Greuel-etal} Lemma II.1.20, \cite{Palamodov2} \S~2.4).

For the following terminology the reader is referred to \cite{Looijenga} \S~6.C, \cite{Greuel-etal} \S~1.3,  \cite{Palamodov} Def.\ 5.1, \cite{Palamodov2} \S~2.6, among many others.
 A deformation $f:(\mathcal{X},X)\to (B,o)$ of a complex space $X$ is called \emph{versal} (some authors say \emph{complete}) if, for any deformation $g:(\mathcal{Y},X)\to
    (C,p)$ of $X$, there exists a holomorphic map of germs of complex spaces $h:(C,p)\to (B,o)$ such that $g=h^*(f)$ i.e. the following diagram commutes
    \[
        \xymatrix{\mathcal{Y}=U\times_B \mathcal{X}\ar[r]\ar[d]^-{g=h^*(f)}&\mathcal{X}\ar[d]^-f\\
                   C\ar[r]^-h&B}
    \]
    In particular the generalized Kodaira--Spencer map $\kappa(f)$ turns out to be surjective (\cite{Palamodov2}, \S~2.6). Moreover the deformation $f$ is said to be an \emph{effective versal} (or \emph{miniversal}) deformation of $X$
    it is versal and $\kappa(f)$ is injective, hence an isomorphism. Finally the deformation $f$ is called \emph{universal} if it is versal and $h$ is
    uniquely defined. This suffices to imply that $f$ is an effective versal deformation of $X$ (\cite{Palamodov2}, \S~2.7.1).

The following result is a central one in the theory of deformation of complex spaces: it is due to many authors as A.~Douady \cite{Douady74}, H.~Grauert \cite{Grauert72} and \cite{Grauert74}, V.P.~Palamodov  \cite{Palamodov72}, among many others.

\begin{theorem}[Existence of a versal deformation]\label{DGP teorema}
\begin{enumerate}
  \item Any representative $U$ of an isolated singularity $(X,x)$ has a miniversal deformation $$f:(\mathcal{U},U)\to(B,o)$$
  (see \cite{Grauert72}, \cite{Greuel-etal}).
  \item Any (compact) complex space $X$ has an effective versal deformation $$f:(\mathcal{X},X)\to (B,o)$$ (see \cite{Palamodov72} for the non compact case and \cite{Douady74}, \cite{Grauert74}, \cite{Palamodov} for the compact case).
  \item The germ of complex space $(B,o)$ obtained by the previous parts (1) and (2) is isomorphic to the germ  $(q_X^{-1}(0), 0)$, where $q_X:\T^1_X\rightarrow \T^2_X$ is a
suitable holomorphic map (the \emph{obstruction map}) such that
$q_X(0)=0$ (see \cite{Greuel-etal} II.1.5, \cite{Palamodov2} \S~2.5)
\end{enumerate}
\end{theorem}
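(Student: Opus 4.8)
The plan is to treat the three parts uniformly, by first constructing a \emph{formal} versal deformation and then establishing its convergence. The deformation--theoretic input is the cotangent complex of Lichtenbaum--Schlessinger, already invoked above: for the local statement one works with the Stein objects $T^i_X$, for the global one with $\T^i_X$, and in both cases the content is that, given a small extension $0\to J\to A'\to A\to 0$ of local Artinian $\C$--algebras and a deformation of $X$ over $A$, there is an obstruction class in $\T^2_X\otimes_\C J$ which vanishes exactly when the deformation lifts to $A'$, while the set of such lifts, when nonempty, is a torsor under $\T^1_X\otimes_\C J$. Finite dimensionality of $\T^1_X$ and $\T^2_X$ --- which holds for an isolated singularity and for a compact $X$ by coherence of the cotangent sheaves together with, in the global case, properness --- is what will eventually force the base to be finite dimensional.

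Granting this Schlessinger--type calculus, I would build the formal family inductively on the order of infinitesimal neighbourhood. Set $r=\dim_\C\T^1_X$, start from $R_1=\C[t_1,\dots,t_r]/\mathfrak{m}^2$ with its tautological first--order deformation, and at the $n$--th step lift the family from $R_n$ to the largest quotient $R_{n+1}$ of $\C[t_1,\dots,t_r]/\mathfrak{m}^{n+2}$ over which a lift exists; the obstruction to lifting over the full truncated polynomial ring is a collection of polynomials with values in $\T^2_X$, so $R_{n+1}$ is cut out by finitely many new equations. In the limit one obtains a formal deformation over a complete local ring $\widehat R=\C[[t_1,\dots,t_r]]/I$ whose relations organise into a single formal germ of map $q_X\colon (\T^1_X,0)\to(\T^2_X,0)$ with $\widehat R\cong\C[[\T^1_X]]/(q_X)$; the torsor property makes this formal deformation versal, and injectivity of the Kodaira--Spencer map at order one makes it miniversal. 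This already proves part (3) at the formal level and reduces parts (1) and (2) to a single point: convergence.

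That convergence is the main obstacle, being the genuinely analytic (as opposed to formal--algebraic) core of the theorem, and it is where one must quote the work of Grauert, Douady and Palamodov rather than reprove it. For an isolated singularity one embeds the problem into a Banach--analytic framework of Grauert type: deformations over a fixed small polydisc are encoded as sections of an appropriate coherent sheaf over a privileged neighbourhood, the obstruction equations become analytic equations between Banach spaces, and the Banach--analytic division and implicit function theorems carve out a finite--dimensional analytic germ $(B,o)\subset(\T^1_X,0)$ carrying a convergent miniversal family; equivalently one may simply apply Artin's approximation theorem to the formal solution of the previous paragraph. For a compact $X$ one runs the relative version of the same obstruction calculus over Douady's Banach--analytic space of deformations, obtaining the effective versal family claimed in part (2); Grauert's and Palamodov's constructions are alternatives. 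Finally, comparing the Zariski tangent space of the so--produced germ $(B,o)$ with $\T^1_X$ via $\kappa(f)$, and the defining equations of $B$ with the components of $q_X$, yields the asserted isomorphism $(B,o)\cong(q_X^{-1}(0),0)$, completing part (3).
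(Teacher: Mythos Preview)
The paper does not supply its own proof of this theorem: it is stated as a foundational result and each part is simply referenced to the original sources (Grauert, Douady, Palamodov, Greuel--Lossen--Shustin). There is therefore no argument in the paper to compare your proposal against.

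That said, your outline is a faithful summary of the standard route taken in those very references: the Schlessinger-type obstruction calculus (lifts over small extensions classified by $\T^1_X\otimes J$, obstructions in $\T^2_X\otimes J$), inductive construction of a formal hull over $\C[[t_1,\dots,t_r]]/I$ with $I$ generated by the components of a formal map $q_X:\T^1_X\to\T^2_X$, and then the hard analytic step of convergence, which you correctly identify as the point where one must invoke Grauert's division/Banach-analytic machinery (or Artin approximation in the local case) and the Douady--Grauert--Palamodov constructions in the compact case. Your sketch is accurate at the level of strategy; the substance of the theorem lies in the convergence arguments, which you appropriately defer to the cited literature just as the paper does.
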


In particular if $q_X\equiv 0$ (e.g. when $\T^2_X=0$) then $(B,o)$
turns out to be isomorphic to the complex space germ $(\T^1_X,0)$.

\begin{definition}[Kuranishi space and number]\label{Def-definizione}
The germ of complex space $(B,o)$ of parts (1) and (2) of the previous Theorem \ref{DGP teorema} is called the
\emph{Kuranishi space of either $(X,x)$ or $X$, respectively}: in the first case one can easily check that it does not depend on the choice of the representative $U$. The Kuranishi space is said to give an \emph{analytic representative} of the deformation functors $\Def(U)$ and $\Def(X)$, respectively, allowing one to set the identifications
\[
    \text{either}\ (\Def(X,x):=)\ \Def(U)=(B,o)\ \text{or}\ \Def(X)=(B,o)\ ,\ \text{respectively.}
\]
The \emph{Kuranishi numbers $\dimdef
(X,x)$ of $(X,x)$ and $\dimdef
(X)$ of $X$}, are then the maximum dimensions of irreducible
components of $\Def(X,x)$ and $\Def(X)$, respectively.

\noindent $\Def(X,x)$ and $\Def(X)$ are said to be
\emph{smooth}, or \emph{unobstructed}, if the obstruction map $q_X$ is the constant map
$q_X\equiv 0$. This means that any first order deformation arises to give a deformation of either $U$ or $X$, respectively. In this case
either $\Def(X,x)\cong(T^1_U,0)$ or $\Def(X)\cong(T^1_X,0)$, respectively,
giving either $\dimdef(X,x)=\dim_{\C}\T^1_U$ or $\dimdef(X)=\dim_{\C}\T^1_X$, respectively. By a slight abuse of notation, we will write
\[
    \text{either}\quad\Def(X,x)\cong\T^1_U\quad\text{or}\quad\Def(X)\cong\T^1_X\ ,\quad\text{respectively.}
\]
\end{definition}

\begin{theorem}[\cite{Palamodov} Thm. 5.5]\label{universality thm}
Let $X$ be a compact complex space such that $\T^0_X=0$. Then the versal effective deformation of $X$, given
in part (2) of Theorem \ref{DGP teorema}, is actually a universal deformation of $X$.
\end{theorem}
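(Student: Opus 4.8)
For universality, only the \emph{uniqueness} of such an $h$ remains to be shown; so the plan is to take as given, from part~(2) of Theorem~\ref{DGP teorema}, that $f\colon(\mathcal{X},X)\to(B,o)$ is an \emph{effective} versal deformation, i.e.\ that (i)~for every deformation $g\colon(\mathcal{Y},X)\to(C,p)$ there is at least one classifying map $h\colon(C,p)\to(B,o)$ with $g\cong h^{*}f$, and (ii)~the Kodaira--Spencer map $\kappa(f)\colon T_oB\to\T^{1}_X$ is bijective, and to upgrade (i) to a uniqueness statement. The whole force of the hypothesis $\T^{0}_X=0$ enters only through the identification $\T^{0}_X\cong H^{0}(X,\Theta_X)$ of \eqref{T0}: it says that $X$ has no infinitesimal automorphisms, and this rigidity is precisely what should pin $h$ down.

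The first move is to reduce uniqueness to the formal category. Since $\mathcal{O}_{C,p}$ is a Noetherian local $\C$--algebra, Krull's intersection theorem gives an embedding $\mathcal{O}_{C,p}\hookrightarrow\widehat{\mathcal{O}}_{C,p}$, so a holomorphic map germ $(C,p)\to(B,o)$ is determined by the local homomorphism it induces on the completed analytic algebras; hence it suffices to prove that the homomorphism $\widehat{\mathcal{O}}_{B,o}\to\widehat{\mathcal{O}}_{C,p}$ classifying the formal deformation $\widehat g$ obtained by completing $g$ along $X$ (with its identification $\mathcal{Y}_p\cong X$) is unique. This transfers the problem onto the Artinian quotients $C_n:=\Spec(\mathcal{O}_{C,p}/\mathfrak{m}_{C,p}^{n+1})$, $n\ge0$. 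The decisive preliminary is a \emph{no--automorphisms lemma}: if $\T^{0}_X=0$ then, for every Artinian local base $A$, every automorphism of a deformation of $X$ over $A$ which restricts to the identity on the central fibre $X$ is the identity. This is proved by induction along a chain of small extensions $A'\to A$ with kernel $I$ (so that $\mathfrak{m}_{A'}I=0$): an automorphism that is already the identity over $A$ deviates from the identity over $A'$ by a section of $H^{0}(X,\Theta_X)\otimes_{\C}I\cong\T^{0}_X\otimes_{\C}I=0$, hence is the identity over $A'$ as well.

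Granting the lemma, I would establish uniqueness of the classifying homomorphism by a second induction, on $n$, showing that any two classifying maps $h_0,h_1$ for $\widehat g$ satisfy $h_0|_{C_n}=h_1|_{C_n}$. For $n=0$ this holds trivially; the step from $n$ to $n+1$ runs as follows. Put $I:=\mathfrak{m}_{C,p}^{n+1}/\mathfrak{m}_{C,p}^{n+2}$ and $\eta:=h_0^{*}f|_{C_n}=h_1^{*}f|_{C_n}$ (equal by the inductive hypothesis). Since $h_0|_{C_n}=h_1|_{C_n}$, the difference $h_0-h_1$ is a $\C$--derivation $\widehat{\mathcal{O}}_{B,o}\to I$, i.e.\ an element of $T_oB\otimes_{\C}I$, which $\kappa(f)$ carries isomorphically onto $\T^{1}_X\otimes_{\C}I$. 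Now the chosen isomorphisms $\widehat g\cong h_i^{*}f$ restrict over $C_n$ to isomorphisms $\widehat g|_{C_n}\xrightarrow{\sim}\eta$, and the composite of one with the inverse of the other is an automorphism of $\eta$ inducing the identity on $X$, hence the identity by the no--automorphisms lemma; so the two restrictions coincide, and the induced isomorphism $h_0^{*}f|_{C_{n+1}}\xrightarrow{\sim}h_1^{*}f|_{C_{n+1}}$ reduces to the identity over $C_n$. This exhibits $h_0^{*}f|_{C_{n+1}}$ and $h_1^{*}f|_{C_{n+1}}$ as one and the same element of the torsor of liftings of $\eta$ over $C_{n+1}$ under $\T^{1}_X\otimes_{\C}I$ (cf.\ \cite{Greuel-etal}), so the ``difference'' of these two liftings vanishes. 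On the other hand, by the base-change functoriality of the Kodaira--Spencer map that same difference equals the image of $h_0-h_1$ under $\kappa(f)$; hence injectivity of $\kappa(f)$ forces $h_0-h_1=0$, i.e.\ $h_0|_{C_{n+1}}=h_1|_{C_{n+1}}$. Passing to the limit over $n$ gives $\widehat h_0=\widehat h_1$, whence $h_0=h_1$, and $f$ is a universal deformation of $X$.

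I expect the main obstacle to lie in making the last paragraph fully rigorous: on the one hand, formulating and proving the base-change functoriality of the Kodaira--Spencer map over a small extension — the identity ``difference of the two liftings $=\kappa(f)(h_0-h_1)$'' together with the matching torsor-theoretic description of the set of liftings (cf.\ \cite{Greuel-etal}, \cite{Palamodov2}); on the other hand, isolating precisely where $\T^{0}_X=0$ intervenes, namely in allowing the two liftings $h_i^{*}f|_{C_{n+1}}$ to be compared \emph{inside a single torsor} (this is the analytic shadow of Schlessinger's pro-representability criterion). The remaining ingredients — the Noetherian/Krull reduction to the formal setting and the good behaviour of the completion of $g$ along the compact central fibre $X$ — are routine.
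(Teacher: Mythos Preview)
The paper does not supply its own proof of this theorem: it is stated with the attribution ``\cite{Palamodov} Thm.~5.5'' and immediately followed by an Example, so there is nothing in the text to compare your argument against. Your proposal is therefore not a reconstruction of the paper's reasoning but an independent proof sketch of a result the author simply quotes.

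On its own merits, your outline is the standard Schlessinger-type argument and is essentially correct. The reduction to Artinian thickenings, the no-automorphisms lemma from $\T^0_X=0$, and the inductive comparison of two classifying maps via the $\T^1_X\otimes I$-torsor of liftings are exactly the ingredients one needs. You have also correctly identified the one genuinely delicate point: the compatibility asserting that the torsor difference of $h_0^*f|_{C_{n+1}}$ and $h_1^*f|_{C_{n+1}}$ equals $\kappa(f)(h_0-h_1)$, together with the fact that the no-automorphisms lemma is what lets you regard both lifts as sitting in \emph{the same} torsor (by forcing the restricted isomorphisms over $C_n$ to coincide). One small caution on the reduction step: for analytic germs the passage from formal equality $\widehat h_0=\widehat h_1$ back to $h_0=h_1$ is justified by injectivity of $\mathcal{O}_{B,o}\hookrightarrow\widehat{\mathcal{O}}_{B,o}$ for the Noetherian analytic local ring, which you invoke; this is fine, but note that you need Noetherianity of the \emph{target} local ring $\mathcal{O}_{B,o}$ as well, which holds here since $(B,o)$ is a germ of complex space.
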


\begin{example}[The germ of complex space of an isolated hypersurface singularity] Let $f:\C^{n+1}\longrightarrow\C$, $n\geq 0$, be a holomorphic map
admitting an isolated critical point in the origin $0\in\C^{n+1}$ and consider the local ring $\mathcal{O}_0$ of germs of holomorphic
function of $\C^{n+1}$ in the origin. It is a well known fact that $\mathcal{O}_0$
is isomorphic to the ring of convergent power series
$\C\{x_1,\ldots,x_{n+1}\}$. A \emph{germ of hypersurface
singularity} $(U_0,0)$ is defined by means of the Stein complex space
\begin{equation}\label{germe}
    U_0:=\Spec(\mathcal{O}_{f,0})
\end{equation}
where $\mathcal{O}_{f,0}:=\mathcal{O}_0/(\overline{f})$ and $\overline{f}$ is the germ
represented by the series expansion of the above given holomorphic function $f$. Let $J_f\subset\C\{x_1,\ldots,x_{n+1}\}$ be the \emph{jacobian ideal} of $f$ i.e. the ideal generated by the partial derivatives of $f$. Then the following facts hold:
\begin{itemize}
  \item since $U_0$ is Stein, (\ref{Ti-Ti}) gives that $\T^i_{U_0}\cong T^i_{U_0}$\ ,
  \item since we are dealing with an isolated hypersurface singularity,
  \begin{equation}\label{ihs-T^i}
    \T^i_{U_0}\cong T^i_{U_0}=0\ ,\quad\text{for $i\geq2$}
  \end{equation}
  (for the case $i=2$, which is all what is useful in the following, see e.g. \cite{Stevens} \S~3 Example on pg.~26; for $i\geq 2$ see e.g. \cite{Greuel-etal} Prop.\ C.4.6(3)),
  \item the Kuranishi space of an isolated hypersurface singularity is then completely described as follows
\begin{equation}\label{ihs-T^1}
    \Def(U_0,0)\cong\T^1_{U_0}\cong \mathcal{O}_{f,0}/J_f\cong\left.\C\left\{x_1,\ldots,x_{n+1}\right\}\right/\left((\overline{f})+J_f\right)
\end{equation}
(the first isomorphism follows by Theorem \ref{DGP teorema} and the previous (\ref{ihs-T^i}); for the second isomorphism see e.g. \cite{Stevens} \S~3 Example on pg.~24, \cite{Greuel-etal} Corollary II.1.17).
\end{itemize}
\end{example}

\begin{example}[The deformation theory of a \cy threefold]
Let us now consider the case of a \emph{\cy threefold $X$}.  A central result in the deformation theory of \cy manifolds is the well--known Bogomolov--Tian--Todorov--Ran Theorem
\cite{Bogomolov78},\cite{Tian87},\cite{Todorov89},\cite{Ran92} asserting that the
Kuranishi space $\Def(X)$ \emph{is smooth}, hence $\Def(X)\cong\T^1_X$ and (\ref{T1'}) gives
that
\begin{equation}\label{kuranishi numero}
    \dimdef(X) = \dim_{\C}\T^1_X = h^1(X,\Theta_X) = h^{2,1}(X)
\end{equation}
where the last equality on the right is obtained by the \cy
condition $\mathcal{K}_X\cong\mathcal{O}_X$. Applying the \cy
condition once again gives $h^0(\Theta_X)=h^{2,0}(X)=0$. Therefore
(\ref{T0}) and Theorem \ref{universality thm} give the existence
of a \emph{universal effective family of \cy deformations of $X$}.
In particular $h^{2,1}(X)$\emph{ turns out to be the dimension of
the complex moduli space of $X$}.
\end{example}

\subsection{Deformation of a morphism}\label{ssez:morf-def}
Let us quickly recall the concept of \emph{deformation of a morphism} as defined by Z.~Ran in \cite{Ran89}.

 Consider a morphism $\phi:Y\rightarrow X$ of complex spaces and let $B$ be a connected complex space with a special point $o\in B$ such that $g:(\mathcal{Y},Y)\rightarrow(B,o)$ and $f:(\mathcal{X},X)\rightarrow(B,o)$ are deformation families of $Y$ and $X$, respectively. Then a \emph{deformation family of the morphism $\phi$} is a morphism $\Phi:\mathcal{Y}\rightarrow\mathcal{X}$ such that the following diagram commutes
\begin{equation}\label{deformazione di morfismo}
\xymatrix{Y\ar@{^{(}->}[rrr]\ar[rd]^{\phi}\ar[ddr]&&&\mathcal{Y}\ar[dl]_{\Phi}\ar[ddl]^{g}\\
          &X\ar@{^{(}->}[r]\ar[d]&\mathcal{X}\ar[d]_{f}&\\
          &o\ar@{}[r]|{\in}&B&}
\end{equation}
with $Y=g^{-1}(o)$, $X=f^{-1}(o)$
    and $\phi = \Phi_{|g^{-1}(o)}$.

    \noindent Given two distinct points $b_1,b_2\in B$ the morphism $\phi_2:=\Phi_{|g^{-1}(b_2)}$ is called a \emph{deformation} of the morphism $\phi_1:=\Phi_{|g^{-1}(b_1)}$ and viceversa.

\subsection{The Friedman diagram}

Let $\phi:Y\to X$ be a birational contraction of a \cy manifold $Y$, of dimension $n\geq 3$, to a normal variety $X$ with isolated rational singularities and assume that the codimension of the exceptional locus $E:=\Exc(\phi)$ is greater than or equal to 2: hence $\phi$ is what is usually called a \emph{small resolution} of $X$. This latter assumption ensures that the Friedman argument of  Lemma (3.1) in \cite{Friedman86} still applies to give that $R^0\phi_*\Theta_Y\cong\Theta_X$. Then we get the following commutative diagram, to which we refer as the \emph{Friedman diagram} (see \cite{Friedman86} (3.4)),
\begin{equation}\label{Friedman-diagramma}
    \xymatrix{H^1(R^0\phi_*\Theta_Y)\ar@{^{(}->}[r]\ar@{=}[d] &
    \mathbb{T}^1_Y\ar[r]^-{\lambda_E}\ar[d]^{\delta_1} &
    H^0\left(R^1\phi_*\Theta_Y\right)\ar[r]\ar[d]^{\delta_{loc}}& H^2(R^0\phi_*\Theta_Y)\ar[r]\ar@{=}[d]
     & \mathbb{T}^2_Y\ar[d]^-{\delta_2}\\
    H^1(\Theta_{X})\ar@{^{(}->}[r]&\mathbb{T}^1_{X}\ar[r]^-{\lambda_P}
     &T^1_{X}\ar[r]& H^2(\Theta_{X})\ar[r] & \mathbb{T}^2_{X}}
\end{equation}
where:
\begin{itemize}
    \item the first row is the lower terms exact sequence of the
    Leray spectral sequence of $\phi_*\Theta_Y$, where we can use
    (\ref{T1'}) as $Y$ is smooth;
    \item the second row is the lower terms exact sequence of the
    Local to Global spectral sequence converging to $\T^n_{X}=
    \Ext^n(\Omega^1_{X},\mathcal{O}_{X})$;
    \item the vertical maps
    \begin{equation*}
        \xymatrix{\T^i_Y=\Ext^i(\Omega_Y,\mathcal{O}_Y)\ar[r]^-{\delta_i}&\T^i_X=\Ext^i(\Omega_X,\mathcal{O}_X)}
    \end{equation*}
arise from the natural map $\phi^*\Omega_X\to\Omega_Y$ just recalling that $X$ has rational singularities, hence giving
$$\Ext^i(\Omega_X,\mathcal{O}_X)
=\Ext^i(\Omega_X,\phi_*\mathcal{O}_Y)=\Ext^i(\phi^*\Omega_X,\mathcal{O}_X)\ ;$$
\end{itemize}

\begin{proposition}[\cite{Friedman86} Prop.~(2.1)]\label{prop:localizzazione-diff}
Let $U_p$ be a Stein neighborhood of the singular point $p\in P=\Sing(X)$ and set $V_p:=\phi^{-1}(U_p)$. Let $\Def(U_p)$ and $\Def(V_p)$ be the Kuranishi spaces defined in parts 1 and 2 (the non-compact case) of Thm.~\ref{DGP teorema}, respectively. Then, under hypothesis given above, one gets:
\begin{enumerate}
    \item\ $\mathbb{T}^1_{V_p} \cong H^1\left(V_p,\Theta_{V_p}\right)\cong
H^0\left(R^1\phi_*\Theta_{V_p}\right)$ and
$H^0\left(R^1\phi_*\Theta_{Y}\right)\cong \bigoplus_{p\in P}
\T^1_{V_p}$ is the tangent space to $\prod_{p\in P}\Def(V_p)$,
    \item\ $\mathbb{T}^1_{U_p} \cong
    T^1_{U_p}$ and $T^1_{X}\cong
    \bigoplus_{p\in P} T^1_{U_p}$
    is the tangent space to $\prod_{p\in P}\Def(U_p)$,
    \item morphisms $\delta_{loc}$ and $\delta_1$ in the Friedman diagram
(\ref{Friedman-diagramma}) are injective.
\end{enumerate}
\end{proposition}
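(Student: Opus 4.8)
The plan is to dispose of parts (1) and (2) by cohomological bookkeeping, and then to isolate the only substantial point of (3) in a single local vanishing. For part (2): since $U_p$ is Stein, (\ref{Ti-Ti}) gives at once $\mathbb{T}^1_{U_p}\cong T^1_{U_p}$. Because $X$ has isolated singularities, $\Theta^1_X$ is a coherent sheaf supported on the finite set $P$, so taking global sections and restricting to the pairwise disjoint Stein neighbourhoods $U_p$ yields $T^1_X=H^0(X,\Theta^1_X)\cong\bigoplus_{p\in P}H^0(U_p,\Theta^1_{U_p})=\bigoplus_{p\in P}T^1_{U_p}$. Finally, by Theorem \ref{DGP teorema} the Zariski tangent space to the Kuranishi space $\Def(U_p)$ is $\mathbb{T}^1_{U_p}$ (it is cut out inside $\mathbb{T}^1_{U_p}$ by an obstruction map vanishing to order $\geq 2$ at the origin), and the tangent space to a finite product of germs is the direct sum of the tangent spaces; hence $T^1_X\cong\bigoplus_{p\in P}T^1_{U_p}$ is the tangent space to $\prod_{p\in P}\Def(U_p)$.

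For part (1): the open set $V_p\subset Y$ is smooth, so (\ref{T1'}) gives $\mathbb{T}^1_{V_p}\cong H^1(V_p,\Theta_{V_p})$. Restricting over $U_p$ the isomorphism $R^0\phi_*\Theta_Y\cong\Theta_X$ (valid, as recalled above, by the argument of \cite{Friedman86} Lemma~(3.1), which uses precisely $\codim\Exc(\phi)\geq 2$ to extend vector fields across the exceptional locus) gives $R^0\phi_*\Theta_{V_p}\cong\Theta_{U_p}$, a coherent sheaf on the Stein space $U_p$. Plugging this into the low-degree exact sequence of the Leray spectral sequence of $\phi:V_p\to U_p$,
\[
    0\longrightarrow H^1(U_p,\Theta_{U_p})\longrightarrow H^1(V_p,\Theta_{V_p})\longrightarrow H^0(U_p,R^1\phi_*\Theta_{V_p})\longrightarrow H^2(U_p,\Theta_{U_p})\,,
\]
and killing the two outer terms by Cartan's Theorem~B, I get $H^1(V_p,\Theta_{V_p})\cong H^0(U_p,R^1\phi_*\Theta_{V_p})$. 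Since $R^1\phi_*\Theta_Y$ is supported over the finite set $P=\Sing(X)$ and restricts over $U_p$ to $R^1\phi_*\Theta_{V_p}$, this gives $H^0(Y,R^1\phi_*\Theta_Y)\cong\bigoplus_{p\in P}H^0(U_p,R^1\phi_*\Theta_{V_p})\cong\bigoplus_{p\in P}\mathbb{T}^1_{V_p}$, which by the same product argument as in (2) is the tangent space to $\prod_{p\in P}\Def(V_p)$.

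For part (3): the first observation is that $\delta_{loc}$ is compatible with localisation over $P$, so under the decompositions of (1) and (2) it is the direct sum of the natural maps $\delta_{loc,p}\colon\mathbb{T}^1_{V_p}\to T^1_{U_p}$ induced by $\phi^*\Omega_{U_p}\to\Omega_{V_p}$ via the identification $T^1_{U_p}\cong\Ext^1_{\mathcal{O}_{V_p}}(\phi^*\Omega_{U_p},\mathcal{O}_{V_p})$ coming (as in the construction of diagram (\ref{Friedman-diagramma})) from the rational singularities of $U_p$; so it suffices to show each $\delta_{loc,p}$ is injective. Factoring $\phi^*\Omega_{U_p}\to\Omega_{V_p}$ as $\phi^*\Omega_{U_p}\twoheadrightarrow\mathcal{M}\hookrightarrow\Omega_{V_p}$, one has $\delta_{loc,p}=b\circ a$ with $a\colon\mathbb{T}^1_{V_p}=\Ext^1_{\mathcal{O}_{V_p}}(\Omega_{V_p},\mathcal{O}_{V_p})\to\Ext^1_{\mathcal{O}_{V_p}}(\mathcal{M},\mathcal{O}_{V_p})$ and $b\colon\Ext^1_{\mathcal{O}_{V_p}}(\mathcal{M},\mathcal{O}_{V_p})\to\Ext^1_{\mathcal{O}_{V_p}}(\phi^*\Omega_{U_p},\mathcal{O}_{V_p})$ the evident maps. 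The kernel $\mathcal{K}$ of $\phi^*\Omega_{U_p}\to\Omega_{V_p}$ and the cokernel $\mathcal{Q}=\Omega_{V_p/U_p}$ are both supported on $\Exc(\phi)\cap V_p$, of codimension $\geq 2$ in the smooth threefold $V_p$. From the long exact sequence of $\Ext_{\mathcal{O}_{V_p}}(-,\mathcal{O}_{V_p})$ applied to $0\to\mathcal{M}\to\Omega_{V_p}\to\mathcal{Q}\to 0$, $\ker a$ is a quotient of $\Ext^1_{\mathcal{O}_{V_p}}(\mathcal{Q},\mathcal{O}_{V_p})$, and this vanishes: a coherent sheaf supported in codimension $\geq 2$ on a smooth variety has vanishing $\mathcal{H}om$ and $\mathcal{E}xt^1$ into $\mathcal{O}$, so the local-to-global spectral sequence forces $\Ext^1_{\mathcal{O}_{V_p}}(\mathcal{Q},\mathcal{O}_{V_p})=0$. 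From the long exact sequence of $0\to\mathcal{K}\to\phi^*\Omega_{U_p}\to\mathcal{M}\to 0$, $\ker b$ is a quotient of $\Hom_{\mathcal{O}_{V_p}}(\mathcal{K},\mathcal{O}_{V_p})$, which is $0$ since $\mathcal{K}$ is torsion and $\mathcal{O}_{V_p}$ torsion-free. Hence $a$ and $b$, and therefore $\delta_{loc,p}$ and $\delta_{loc}$, are injective. Finally, the injectivity of $\delta_1$ follows by a diagram chase in (\ref{Friedman-diagramma}): if $v\in\mathbb{T}^1_Y$ has $\delta_1(v)=0$ then $\delta_{loc}(\lambda_E(v))=\lambda_P(\delta_1(v))=0$, so $\lambda_E(v)=0$; by exactness of the top row $v$ is the image of some $w\in H^1(R^0\phi_*\Theta_Y)=H^1(\Theta_X)$, and then $0=\delta_1(v)$ is the image of $w$ under the injection $H^1(\Theta_X)\hookrightarrow\mathbb{T}^1_X$, so $w=0$ and $v=0$.

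I expect the main obstacle to be the injectivity of $\delta_{loc}$: parts (1) and (2), as well as the final deduction of the injectivity of $\delta_1$, are only Leray spectral sequences, Cartan's Theorem~B, support considerations and a diagram chase, whereas that local step is exactly where the whole weight of the smallness hypothesis $\codim\Exc(\phi)\geq 2$ is felt, through the vanishing of the $\Ext$ groups of torsion sheaves supported on the exceptional locus.
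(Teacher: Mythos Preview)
Your proof is correct and follows the same line as the paper, which simply defers parts (1), (2) and the injectivity of $\delta_{loc}$ to Proposition~(2.1) of \cite{Friedman86} and then deduces the injectivity of $\delta_1$ by exactly the diagram chase you give. Your argument is in effect a self-contained unpacking of Friedman's: the Leray/Cartan~B reductions and the $\Ext$-vanishing for sheaves supported in codimension~$\geq 2$ on a smooth base are precisely the ingredients underlying his Proposition~(2.1). (One trivial slip: $R^1\phi_*\Theta_Y$ is a sheaf on $X$, so write $H^0(X,R^1\phi_*\Theta_Y)$ rather than $H^0(Y,\,\cdot\,)$.)
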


\begin{proof} Parts 1 and 2 follow immediately by Prop.~(2.1), part 1, in \cite{Friedman86}, recalling the previously displayed formulas (\ref{T1'}) and (\ref{Ti-Ti}). To prove part 3 notice, on the one hand, that the injectivity of $\delta_1$ follows from the injectivity $\delta_{loc}$, by an easy diagram chase. On the other hand, $\delta_{loc}$ is injective by Prop.~(2.1), part 2, in \cite{Friedman86}.
\end{proof}

\begin{remark} Since $Y$ is a \cy 3-fold, the Bogomolov--Tian--Todorov--Ran Theorem
gives that $\Def(Y)\cong H^1(\Theta_{Y})$. Moreover $\phi$ is a small resolution giving that $\Sing(X)$ is composed at most by
terminal singularities of index 1: in \cite{Namikawa94}, Theorem A, Y.~Namikawa proved an extension of
the Bo\-go\-mo\-lov--Tian--Todorov--Ran Theorem allowing to conclude
that $\Def(X)$ is smooth also in the present situation, hence giving $\Def(X)\cong\T^1_X$. By the previous Prop.~\ref{prop:localizzazione-diff}, the localization near to a singular point $p\in\Sing(X)$ of the second square on the left of the Friedman diagram (\ref{Friedman-diagramma}) can then be rewritten as follows
\begin{equation}\label{Friedman-locale}
    \xymatrix{\Def(Y)\cong H^1(\Theta_{Y})\ar[r]^-{\lambda_{E_p}}
    \ar@{^{(}->}[d]^-{\delta_1}&\Def(V_p)\cong H^0(R^1\phi_*\Theta_{V_p})\ar@{^{(}->}[d]^-{\delta_{loc,p}}\\
    \Def(X)\cong\T^1_X\ar[r]^-{\lambda_p}&\Def(U_p)\cong T^1_{U_p}}
\end{equation}
where $E_p=\phi^{-1}(p)$ is the exceptional locus over $p$. In the following we will refer to this diagram as the \emph{local Friedman diagram}. Notice that all the maps involved in this diagram are compatible with corresponding natural transformations between the associated deformation functors. For a deeper understanding of this fact the interested reader is referred to \cite[\S~1]{Wahl76}, \cite[(3.4)]{Friedman86}, \cite[(11.3-4)]{Kollar-Mori92} and references therein.
\end{remark}

\subsection{Fiber products of rational elliptic surfaces with sections}

In the present subsection we will review some well known facts about
rational elliptic surfaces with section and their fiber products.
For further details the reader is
referred to \cite{M}, \cite{MP} and \cite{S}.

Let $Y$ and $Y'$ be \emph{rational elliptic surfaces with
sections} i.e. rational surfaces admitting elliptic fibrations
over $\P^1$
\[
r\ :\ \xymatrix@1{Y\ar[r] &\ \P^1}\quad,\quad r'\ :\
\xymatrix@1{Y'\ar[r] &\ \P^1}
\]
with distinguished sections $\sigma_0$ and $\sigma'_0$,
respectively (notation as in \cite{S} and \cite{MP}). Define
\begin{equation}\label{fiber-prod}
    X:= Y\times_{\P^1}Y'\ .
\end{equation}
Write $S$ (resp. $S'$) for the images of the singular fibers of
$Y$ (resp. $Y'$) in $\P^1$.

\begin{proposition}\label{preliminari}\hfill
\begin{enumerate}
    \item The fiber product $X$ is smooth if and only if $S\cap S' =
    \emptyset$. In particular, if smooth, $X$ is a \cy threefold \emph{(\cite{S} \S2)} and $\chi(X)=0$.
    \item $Y$ (resp. $Y'$) is the blow--up of $\P^2$ at the base
    locus of a rational map $\varrho: \P^2\dashrightarrow\P^1$ (resp.
    $\varrho': \P^2\dashrightarrow\P^1$) \emph{(\cite{MP} Prop. 6.1)}.
    \item If $Y=Y'$ is \emph{sufficiently general} and $r(=r')$ admits at most nodal
fibers, then there always exists a small projective resolution
$\widehat{X}$ of $X$ \emph{(\cite{S} Lemma (3.1))}. Moreover if $Y$
     has exactly $\nu\geq 0$ nodal
    fibers then $\chi(X)=\nu$ and $\chi(\widehat{X})=2\nu$.
\end{enumerate}
\end{proposition}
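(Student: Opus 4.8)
The plan is to dispatch the three assertions in turn, each being a recollection of results of Schoen \cite{S} and Miranda--Persson \cite{MP}; I only indicate the short arguments filling in those citations. Throughout, $\chi$ denotes the topological Euler characteristic.

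\emph{Part (1).} First I would view $X=Y\times_{\pl}Y'$ as the preimage of the diagonal $\Delta\subset\pl\times\pl$ under $r\times r':Y\times Y'\to\pl\times\pl$. Since $\Delta$ is a smooth divisor and $Y\times Y'$ is smooth, $X$ is smooth at a point $(y,y')$ over $t\in\pl$ exactly when $r\times r'$ is transverse to $\Delta$ there; as the image of $d(r\times r')_{(y,y')}$ equals $\im(dr_y)\oplus\im(dr'_{y'})\subset T_t\pl\oplus T_t\pl$, transversality fails precisely when $dr_y=0$ and $dr'_{y'}=0$, i.e. when $y\in\Sing(Y_t)$ and $y'\in\Sing(Y'_t)$. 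Hence $\Sing(X)\neq\emptyset$ forces $t\in S\cap S'$, and conversely each $t\in S\cap S'$ produces singular points of $X$; this gives the first equivalence. When $S\cap S'=\emptyset$ I would verify the \cy conditions: from $\omega_Y\cong r^*\co_{\pl}(-1)$ (a relatively minimal rational elliptic surface with section has $-K_Y$ equal to the fibre class) one gets $\omega_{Y/\pl}\cong r^*\co_{\pl}(1)$, so the base-change/adjunction formula $\omega_X\cong p_1^*\omega_{Y/\pl}\otimes p_2^*\omega_{Y'/\pl}\otimes\pi^*\omega_{\pl}$, with $\pi:X\to\pl$, yields $\omega_X\cong\co_X$, while $h^{1,0}(X)=h^{2,0}(X)=0$ follows from the Leray spectral sequence of $\pi$ as in \cite{S}~\S2. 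Finally, additivity of $\chi$ over $\pi$ gives $\chi(X)=\chi(\pl)\cdot 0+\sum_{t}\bigl(\chi(Y_t\times Y'_t)-0\bigr)=\sum_t\chi(Y_t)\chi(Y'_t)$, and when $S\cap S'=\emptyset$ each summand has one smooth factor, of Euler characteristic $0$, so $\chi(X)=0$.

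\emph{Part (2).} This is \cite{MP}~Prop.~6.1, which I would simply quote: from $\chi(Y)=12$ and Noether's formula one gets $K_Y^2=0$, so the rational surface $Y$ is a blow-up of $\P^2$, and pushing the anticanonical pencil $|-K_Y|$ down to $\P^2$ realizes $r$ as the resolution of the base locus of a pencil of plane cubics, i.e. of a rational map $\varrho:\p2\dashrightarrow\pl$.

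\emph{Part (3).} Suppose $Y=Y'$ is sufficiently general with singular fibres all irreducible nodal curves (Kodaira type $I_1$), over the $\nu$ points of $S=S'$. By part (1), $\Sing(X)$ consists of the $\nu$ points $(y_t,y_t)$, one over each $t\in S$, where $y_t$ is the node of $Y_t$. Choosing coordinates $(u,v)$ on $Y$ near $y_t$ in which $r=uv$ (so $Y_t=\{uv=0\}$), the fibre product is locally $\{uv=u'v'\}\subset\C^4$, an ordinary double point; hence every singular point of $X$ is a node and admits small analytic resolutions. That these can be glued into a \emph{projective} small resolution $\widehat{X}$ — the step that really uses the general-position hypothesis — is precisely \cite{S}~Lemma~(3.1), which I invoke. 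For the Euler characteristics: a type-$I_1$ fibre has $\chi=2-1=1$ (normalize to $\P^1$ and glue two points), so the additivity formula of part (1) gives $\chi(X)=\sum_{t\in S}\chi(Y_t)^2=\nu$; and a small resolution replaces each node (a point, $\chi=1$) by a $\P^1$ ($\chi=2$), whence $\chi(\widehat{X})=\chi(X)+\nu(2-1)=2\nu$.

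The main obstacle, were one to insist on a self-contained proof rather than citations, is the global part of (3): small resolutions of nodes are not projective in general, so producing a projective resolution of all $\nu$ nodes at once forces one to exploit the genericity of $Y$, which is exactly the content of Schoen's Lemma~(3.1) and I would not reprove it; likewise the Hodge-number vanishings in (1) and the plane-model statement (2) I would take directly from \cite{S} and \cite{MP}.
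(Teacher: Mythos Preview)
The paper does not supply its own proof of this proposition: it is stated as a summary of known facts, with the citations to \cite{S}~\S2, \cite{MP}~Prop.~6.1 and \cite{S}~Lemma~(3.1) embedded in the statement serving in lieu of proof. Your proposal is a correct and well-organized elaboration of precisely those citations --- the transversality argument and adjunction computation for (1), the $K_Y^2=0$ reduction to a cubic pencil for (2), the local $\{uv=u'v'\}$ node model plus Euler-characteristic bookkeeping for (3) --- so there is nothing to compare beyond noting that you have filled in what the paper leaves to the references.
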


\begin{theorem}[Weierstrass representation of an elliptic surface with
section, \cite{Kas} Thm. 1, \cite{M} Thm. 2.1, \cite{HL} \S
2.1 and proof of Prop. 2.1]\label{W-teorema} Let
$r:\widetilde{Y}\longrightarrow C$ be a relatively minimal elliptic surface
over a smooth base curve $C$, whose generic fibre is smooth and
admitting a section $\sigma:C\longrightarrow \widetilde{Y}$ (then $\widetilde{Y}$ is
algebraic \cite{K}). Let $\mathcal{L}$ be the co--normal sheaf of
$\sigma(C)\subset \widetilde{Y}$.

Then $\mathcal{L}$ is invertible and there exists
\[
A\in H^0(C,\mathcal{L}^{\otimes 4})\quad,\quad B\in
H^0(C,\mathcal{L}^{\otimes 6})
\]
such that $\widetilde{Y}$ is the minimal resolution of the closed subscheme $Y$ of the
projectivized bundle $\P(\mathcal{E}):=\P(\mathcal{L}^{\otimes
3}\oplus\mathcal{L}^{\otimes 2}\oplus\mathcal{O}_C)$ defined by
the zero locus of the homomorphism
\begin{eqnarray}\label{W-map}
    \quad\quad(A,B)&:&\xymatrix@1{\mathcal{E}=\mathcal{L}^{\otimes
    3}\oplus\mathcal{L}^{\otimes
    2}\oplus\mathcal{O}_C\ \ar[rrr]&&&\hskip1cm
    \mathcal{L}^{\otimes 6}}\\
\nonumber
    &&\xymatrix@1{\hskip1cm (x,y,z)\hskip1cm\ar@{|->}[rr]&& - x^2z + y^3 + A\ yz^2 + B\
    z^3}\ .
\end{eqnarray}
The pair $(A,B)$ (hence the homomorphism (\ref{W-map})) is
uniquely determined up to the transformation $(A,B)\mapsto(c^4 A,
c^6 B),\ c\in\C^*$ and the \emph{discriminant form}
\[
\delta:= 4\ A^3 + 27\  B^2 \in H^0(C,\mathcal{L}^{\otimes 12})
\]
vanishes at a point $\lambda\in C$ if and only if the fiber
$Y_{\lambda}:= r^{-1}(\lambda)$ is singular.
\end{theorem}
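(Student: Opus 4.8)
The plan is to follow the classical argument of Kas and Miranda, reducing the whole statement to the fibrewise geometry of plane cubic curves together with a careful bookkeeping of line bundles on $C$. The first point, invertibility of $\mathcal{L}$, is immediate: $\widetilde{Y}$ is smooth and $\sigma(C)$ is a smooth curve on it, hence a Cartier divisor, so $\mathcal{L}=\mathcal{I}_{\sigma(C)}/\mathcal{I}^2_{\sigma(C)}\cong\co_{\widetilde{Y}}(-\sigma(C))\big|_{\sigma(C)}$ is an invertible sheaf on $\sigma(C)\cong C$. (If one wishes, by the canonical bundle formula for elliptic surfaces without multiple fibres one has $\omega_{\widetilde{Y}}\cong r^{*}(\omega_C\otimes\mathcal{M})$ with $\mathcal{M}=(R^1r_{*}\co_{\widetilde{Y}})^{\vee}$, and adjunction along $\sigma(C)$ identifies $\mathcal{L}\cong\mathcal{M}$, the \emph{fundamental line bundle}; this is not strictly needed below.)

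Next I would set up the relative model. Since $r$ is relatively minimal and carries the section $\sigma$, every fibre $\widetilde{Y}_\lambda$ is a connected curve of arithmetic genus $1$ meeting $\sigma(C)$ transversally at a single smooth point lying on a reduced component. Hence for $n\ge 1$ the line bundle $\co_{\widetilde{Y}}(n\sigma(C))\big|_{\widetilde{Y}_\lambda}$ has degree $n$ with $h^0=n$ and $h^1=0$ (Riemann--Roch on arithmetic genus $1$), and in particular $R^1r_{*}\co_{\widetilde{Y}}(m\sigma(C))=0$ for $m\ge 1$. By cohomology and base change, $\mathcal{F}_n:=r_{*}\co_{\widetilde{Y}}(n\sigma(C))$ is locally free of rank $n$, commutes with base change, and $\mathcal{A}:=\bigoplus_{n\ge0}\mathcal{F}_n$ is a sheaf of graded $\co_C$-algebras. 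Put $Y:=\operatorname{Proj}_C\mathcal{A}$. The relative linear system $|3\sigma(C)|$ defines a morphism $\phi:\widetilde{Y}\to Y$ which is fibrewise the embedding of a genus-$1$ curve as a plane cubic in $\P^2$; thus $\phi$ is birational, an isomorphism over the locus of smooth fibres, and contracts exactly the fibre components disjoint from $\sigma(C)$. Those components form $A$--$D$--$E$ configurations of $(-2)$-curves, $Y$ has at worst rational double points, and $\phi$ is crepant, so $\widetilde{Y}$ is \emph{the} minimal resolution of $Y$, as asserted.

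Now for the normal form. From $R^1r_{*}\co_{\widetilde{Y}}(m\sigma(C))=0$ for $m\ge 1$ one gets short exact sequences $0\to\mathcal{F}_{n-1}\to\mathcal{F}_n\to\mathcal{L}^{\otimes -n}\to 0$ for $n\ge 2$, together with $\mathcal{F}_1\cong\co_C$; one then shows (this is the technical heart, see below) that these extensions split, so that $\mathcal{F}_2\cong\co_C\oplus\mathcal{L}^{\otimes -2}$ and $\mathcal{F}_3\cong\co_C\oplus\mathcal{L}^{\otimes -2}\oplus\mathcal{L}^{\otimes -3}$. Choosing generators $z,x,y$ of the three summands (pole orders $0,2,3$ along $\sigma(C)$) yields a closed embedding over $C$ of $Y$ into $\P(\mathcal{E})$ with $\mathcal{E}=\co_C\oplus\mathcal{L}^{\otimes -2}\oplus\mathcal{L}^{\otimes -3}$, which is the $\P^2$-bundle of the statement up to the harmless twist $\P(\mathcal{E})=\P(\mathcal{E}\otimes\mathcal{L}^{\otimes 3})$. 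In the rank-$6$ bundle $\mathcal{F}_6$, the sections $x^3$ and $y^2$ are, up to scalars, the only ones with pole order exactly $6$ along $\sigma(C)$, so a suitable combination has pole order $\le 5$; expanding it recursively down the filtration $\mathcal{F}_5\supset\mathcal{F}_4\supset\cdots$ produces a relation $y^2=x^3+a_2x^2z+a_4xz^2+a_6z^3$ with $a_i\in H^0(C,\mathcal{L}^{\otimes i})$, and completing the cube by $x\mapsto x-\tfrac13a_2z$ (legitimate in characteristic $0$) removes $a_2$. After relabelling as in the statement this is precisely the cubic $-x^2z+y^3+Ayz^2+Bz^3$ cutting out $Y$ in $\P(\mathcal{E})$, with $A\in H^0(C,\mathcal{L}^{\otimes 4})$ and $B\in H^0(C,\mathcal{L}^{\otimes 6})$.

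For uniqueness, the Weierstrass form singles out in each fibre $\P^2$ the line $\{z=0\}$ — the unique line meeting $Y_\lambda$ only at the inflection point $\sigma(\lambda)$ — and the flex itself; the only automorphisms of the $\P^2$-bundle preserving these data and the normal form are $(x,y)\mapsto(c^2x,c^3y)$ with $c\in H^0(C,\co_C^{*})=\C^{*}$, which send $(A,B)\mapsto(c^4A,c^6B)$. For the discriminant, trivialising $\mathcal{L}$ near $\lambda$, the fibre $Y_\lambda$ is the plane cubic $-x^2z+y^3+A(\lambda)yz^2+B(\lambda)z^3=0$; its point at infinity $[1:0:0]$ is always a smooth flex, and the affine chart $x^2=y^3+A(\lambda)y+B(\lambda)$ is singular exactly when the cubic discriminant $4A(\lambda)^3+27B(\lambda)^2=\delta(\lambda)$ vanishes. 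Since $\phi$ is an isomorphism over the smooth locus of $Y$, the fibre $r^{-1}(\lambda)$ is singular iff $Y_\lambda$ is, i.e. iff $\delta(\lambda)=0$, which closes the proof. The genuinely delicate step is the one flagged above: producing the global generators $x,y$, equivalently establishing the splitting of $\mathcal{F}_2$ and $\mathcal{F}_3$, and keeping exact track of all twists by powers of $\mathcal{L}$ so that $A$ and $B$ land precisely in $H^0(\mathcal{L}^{\otimes 4})$ and $H^0(\mathcal{L}^{\otimes 6})$; for the case relevant to this paper, $C=\P^1$, every bundle on $C$ splits and the twist computation is straightforward, while the general case is exactly the content of \cite{Kas}, \cite{M}.
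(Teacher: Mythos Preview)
The paper does not actually prove this theorem: it is stated as a background result with references to Kas, Miranda, and Heckman--Looijenga, and is immediately followed by Remark~\ref{W-razionale} with no intervening proof. So there is nothing in the paper to compare your argument against; your proposal is a self-contained sketch of the classical Kas--Miranda proof that the paper merely cites.

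Your outline is correct and faithful to those sources: invertibility of $\mathcal{L}$ via smoothness of $\sigma(C)$, the pushforward bundles $\mathcal{F}_n=r_*\co_{\widetilde{Y}}(n\sigma(C))$ and their splitting, the relative $|3\sigma(C)|$ map onto a Weierstrass model $Y\subset\P(\mathcal{E})$, contraction of the fibre components missing $\sigma(C)$ to rational double points, and the fibrewise discriminant computation. One small omission: when you write the relation in $\mathcal{F}_6$ as $y^2=x^3+a_2x^2z+a_4xz^2+a_6z^3$ you have silently dropped the $a_1xyz$ and $a_3yz^2$ terms of the long Weierstrass form; these are eliminated by first completing the square in $y$ (valid in characteristic $\neq 2$), which you should mention alongside the ``completing the cube'' step that kills $a_2$. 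With that addition the argument is complete and matches the references the paper invokes.
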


\begin{remark}\label{W-razionale}
Assume that the elliptic surface $r:Y\longrightarrow C$ is
\emph{rational}. Then \emph{$C\cong\P^1$ is a rational curve and
the section $\sigma(C)$ is a $(-1)$--curve in $Y$} (see \cite{M}
Proposition (2.3) and Corollary (2.4)). In particular
$\mathcal{L}\cong\mathcal{O}_{\P^1}(1)$ and (\ref{W-map}) is a
homomorphism
\begin{equation}\label{W-rat-map}
    \xymatrix@1{\mathcal{E}=\mathcal{O}_{\P^1}(3)\oplus\mathcal{O}_{\P^1}(2)
\oplus\mathcal{O}_{\P^1}\ar[rr]^-{(A,B)} && \
\mathcal{O}_{\P^1}(6)}\ .
\end{equation}
\end{remark}

\oneline Consider the fiber product
\[
X:= Y\times_{\P^1}Y
\]
of the Weierstrass fibration defined as the zero locus
$Y\subset\P(\mathcal{E})$ of the bundle homomorphism
(\ref{W-rat-map}). Hence, for generic $A,B$, the rational elliptic
surface $Y$ has smooth generic fiber and a finite number of
distinct singular fibers associated with the zeros of the
discriminant form $\delta = 4 A^3 + 27 B^2$. In general the
singular fibers are nodal and $\Sing(X)$ is composed by a finite
number $\nu = 12$ of distinct nodes. We can then apply
Proposition \ref{preliminari}(3) to guarantee the existence of a
small resolution $\widehat{X}\longrightarrow X$ whose exceptional
locus is the union of disjoint $(-1,-1)$--curves, i.e. rational
curves $C\cong\P^1$ in $X$ whose normal bundle is
$\mathcal{N}_{C|X}\cong
\mathcal{O}_{\P^1}(-1)\oplus\mathcal{O}_{\P^1}(-1)$.

\noindent Anyway, if either $A$ and $B$ have a common root or
$A\equiv 0$, the Weierstrass fibration $Y$ may admit
\emph{cuspidal fibers}: in this case the existence of a small
resolution for $X$ is no more guaranteed by Proposition
\ref{preliminari}(3).

\section{The Namikawa fiber product}
\label{sec:2}
In \cite{N}, \S 0.1,  Y. Namikawa considered the Weierstrass
fibration associated with the bundle homomorphism
\begin{eqnarray}\label{W-Nami}
    (0,B)&:&\xymatrix@1{\mathcal{E}=\mathcal{O}_{\P^1}(3)\oplus\mathcal{O}_{\P^1}(2)
\oplus\mathcal{O}_{\P^1}\ \ar[rr]&&\quad
    \mathcal{O}_{\P^1}(6)}\\
\nonumber
    &&\xymatrix@1{\hskip1cm (x,y,z)\hskip1cm\ar@{|->}[rr]&&\quad - x^2z + y^3 + B(\lambda)\
    z^3}
\end{eqnarray}
i.e. its zero locus $Y\subset\P(\mathcal{E})$. The associated
discriminant form is $\delta(\lambda)=27B(\lambda)^2\in
H^0(\mathcal{O}_{\P^1}(12))$ whose roots are precisely those of
$B\in H^0(\mathcal{O}_{\P^1}(6))$. Hence, for a generic $B$, the
rational elliptic surface $Y\longrightarrow\P^1$ has smooth
generic fiber and six distinct \emph{cuspidal fibers}.

\begin{proposition}\label{prop:cuspidi} The fiber product $X:= Y\times_{\P^1}Y$ is a threefold admitting 6
\emph{threefold cups} whose local type is described by the
singularity
\begin{equation}\label{cA2}
    X^2 - U^2 - Y^3 + V^3 \in \C[X,Y,U,V] \ .
\end{equation}
In the standard Kodaira notation these
are singularities of type $II\times II$ (\cite{K}, Theorem 6.2).
\end{proposition}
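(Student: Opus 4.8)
The plan is to compute the local structure of $X = Y \times_{\P^1} Y$ near a point lying over a cuspidal fiber, working in Weierstrass coordinates. First I would fix a root $\lambda_0 \in \P^1$ of $B$, so that the fiber $Y_{\lambda_0}$ is a cuspidal cubic. Choosing an affine coordinate $\lambda$ on $\P^1$ centred at $\lambda_0$, and trivializing $\mathcal{E}$ locally, the surface $Y$ is cut out in $\C^3_{(x,y,\lambda)}$ (after setting $z=1$) by the single equation $-x^2 + y^3 + B(\lambda) = 0$; since $\lambda_0$ is a simple root of the generic sextic $B$, we may write $B(\lambda) = \lambda \cdot u(\lambda)$ with $u(\lambda_0) \neq 0$, and after the analytic change of coordinate $\lambda \mapsto \lambda u(\lambda)$ (a local isomorphism) the equation becomes $-x^2 + y^3 + \lambda = 0$, exhibiting $Y$ locally as a smooth surface with the projection $r$ given by $\lambda = x^2 - y^3$.

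Next I would form the fiber product. Denoting the two copies of these local coordinates by $(x,y)$ and $(u,v)$, the fiber product $X$ is locally the subvariety of $\C^4_{(x,y,u,v)}$ cut out by $r(x,y) = r(u,v)$, i.e.
\[
    x^2 - y^3 = u^2 - v^3\ ,
\]
which is precisely the hypersurface singularity
\[
    X^2 - U^2 - Y^3 + V^3 = 0
\]
of the statement (after renaming $x \to X$, $y \to Y$, $u \to U$, $v \to V$). One should check this is indeed an isolated singularity at the origin: the partial derivatives are $2X, 2U, -3Y^2, 3V^2$, which vanish simultaneously only at the origin, so the singular locus is the single point, and it is the unique singular point of $X$ over $\lambda_0$ since elsewhere on $Y_{\lambda_0} \times Y_{\lambda_0}$ at least one factor is a smooth point of its fiber. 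Running over the six simple roots of the generic sextic $B$ gives the six singular points, and these are all of $\Sing(X)$ because $X$ is smooth away from $S \times S$ on the base (cf. Proposition \ref{preliminari}(1)).

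For the Kodaira-type identification, I would invoke that a cuspidal fiber is exactly the Kodaira type $II$ fiber, so the singularity of $X$ over $\lambda_0$ arises as the "product" $II \times II$ in the sense of fiber products of elliptic fibrations over a common base; the cited \cite{K}, Theorem 6.2, classifies the local analytic type of such fiber-product singularities, and its entry for $II \times II$ is the hypersurface \eqref{cA2}. The main obstacle here is really just the bookkeeping: making sure the local normal form reduction $B(\lambda) \rightsquigarrow \lambda$ is legitimate (which needs $\lambda_0$ to be a \emph{simple} root, guaranteed by genericity of $B$, so that $Y$ itself is smooth near the cuspidal fiber apart from the cusp of the fiber, and the total space of the Weierstrass model requires no minimal resolution there), and checking that the defining equation of the fiber product does not pick up extra terms from the patching of the projective bundle $\P(\mathcal{E})$ — but since we localize to an affine chart with $z = 1$ away from the section $\sigma_0$, these issues do not arise, and the computation is transparent.
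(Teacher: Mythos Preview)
Your proposal is correct and follows essentially the same approach as the paper: both work in the affine Weierstrass chart $z=w=1$, write the local equations of the two factors, and eliminate the base parameter to arrive at the hypersurface $X^2 - U^2 - Y^3 + V^3 = 0$. The only cosmetic difference is that you first solve $\lambda = x^2 - y^3$ on each factor (using the simple-root hypothesis) and then form the fiber product, whereas the paper writes the product as a complete intersection in $\C^5$ and subtracts the two equations; the content is the same.
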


\begin{proof} Our hypothesis give
\begin{equation}\label{W-N}
\P(\mathcal{E})\times\P^1[\lambda] \supset Y\ :\  x^2z = y^3 +
B(\lambda)z^3\ .
\end{equation}
Then its fiber self--product can be represented as follows
\begin{equation}\label{Nami}
    \P:=\P(\mathcal{E})\times\P(\mathcal{E})\times\P^1[\lambda]\supset X\ :\
    \left\{\begin{array}{c}
    x^2z = y^3 + B(\lambda)z^3 \\
    u^2w = v^3 + B(\lambda)w^3 \\
    \end{array}\right.\ .
\end{equation}
Since the
problem is a local one, let us consider the open subset
$\mathcal{A}\subset\P$ defined
by setting
\begin{equation}\label{apertoW}
    \mathcal{A}:=\{(x:y:z)\times(u:v:w)\times(\lambda_0:\lambda_1)\ |\ z\cdot w\cdot \lambda_1\neq
    0\}\cong \C^5 (X,Y,U,V,t)
\end{equation}
where $X=x/z,Y=y/z,U=u/w,V=v/w$ and $t=\lambda_0/\lambda_1$. Then
$\mathcal{A}\cap X$ can be locally described by equations
\begin{equation}\label{eq.localeW}
\left\{\begin{array}{c}
    X^2 = Y^3 + B(t) \\
    U^2 = V^3 + B(t) \\
    \end{array}\right.\ ,
\end{equation}
where $B(\lambda)=\lambda_1^6
B(t)$. If $t_0$ is a zero of the discriminant $\delta(t)= 27 B(t)^2$ then
$p_{t_0}=((0:0:1),(0:0:1),(t_0:1))\in X$
is a singular point, whose local equations are obtained from
(\ref{eq.localeW}) by replacing $B(t)$ with its Taylor expansion in a neighborhood of $t_0$, giving
\begin{equation}\label{eq.localeW2}
\left\{\begin{array}{c}
    X^2 = Y^3 + t^iB^{(i)}(t_0)/i! +o(t^i) \\
    U^2 = V^3 + t^iB^{(i)}(t_0)/i! +o(t^i) \\
    \end{array}\right.\ ,
\end{equation}
where $i$ is the minimum order of derivatives such that $B^{(i)}(t_0)=dB^i(t_0)/dt^i$ does not vanish. Then the germ described by equations
(\ref{eq.localeW2}) turns out to be the same described by the local equation $X^2 - U^2 - Y^3 + V^3 = 0$.
\end{proof}

As already observed above, Proposition \ref{preliminari}(3) can
then no more be applied to guarantee the existence of a small
resolutions $\widehat{X}\longrightarrow X$. In this case Y.~Namikawa
proved the following

\begin{proposition}[\cite{N} Example in \S 0.1]\label{nami-risoluzione}
The \emph{cuspidal fiber product} $X= Y\times_{\P^1}Y$ associated
with the Weierstrass fibration $Y$, defined as the zero locus in
$\P(\mathcal{E})$ of the bundle homomorphism (\ref{W-Nami}),
admits six small resolutions which are connected to each other by
flops of $(-1,-1)$--curves. The exceptional locus of any such
resolution is given by six disjoint couples of $(-1,-1)$--curves
intersecting in one point.
\end{proposition}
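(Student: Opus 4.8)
The plan is to analyze the singularity $X^2-U^2-Y^3+V^3=0$ of type $II\times II$ directly and resolve it by hand, exhibiting a small resolution whose exceptional fiber is a pair of $(-1,-1)$-curves meeting at a point, and then to count the flops. The starting observation is that the quadric part $X^2-U^2=(X-U)(X+U)$ factors, so the local equation becomes
\[
(X-U)(X+U)=Y^3-V^3=(Y-V)(Y-\omega V)(Y-\omega^2 V),
\]
where $\omega$ is a primitive cube root of unity. Set $p=X-U$, $q=X+U$; then the equation reads $pq=(Y-V)(Y-\omega V)(Y-\omega^2 V)$, an equation of the shape $pq=\prod_{j=1}^3 \ell_j$ with the $\ell_j$ three distinct linear forms in $(Y,V)$. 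Such a singularity is a well-understood $cA_2$-type compound Du Val point: its minimal resolution of the generic hyperplane section (the surface singularity $A_2$) has a chain of two $(-2)$-curves, and the threefold admits small resolutions obtained by blowing up the Weil divisors $\{p=0,\ \ell_{j_1}=0\}$ (equivalently the ideals $(p,\ell_{j})$). I would carry this out explicitly: blow up the ideal $(p,\,\ell_1)$, observe that on the resulting threefold the strict transform is smooth along the exceptional locus, and that the exceptional fiber over the origin is a $\P^1$; then a second blow-up (or a single simultaneous blow-up of a rank-two sheaf, as in Schoen or Morrison) splits off the second curve. One checks the normal bundle of each component is $\mathcal{O}(-1)\oplus\mathcal{O}(-1)$ by a direct chart computation, and that the two curves meet transversally at one point because the $A_2$-chain has exactly one node. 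Globally, since the six singular points of $X$ are isolated and lie in disjoint affine charts of the form \eqref{apertoW}, these local small resolutions glue (the projectivity of the resulting $\widehat X$ follows as in Schoen, Proposition~\ref{preliminari}(3), or by checking that the relevant Weil divisors are $\mathbb{Q}$-Cartier globally — this is where I would be most careful).

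For the count of six small resolutions: at a single $cA_2$ point $pq=\ell_1\ell_2\ell_3$ the small resolutions correspond to orderings of the three factors $\ell_1,\ell_2,\ell_3$ into two groups compatibly with a chain, i.e. to the $3!=6$ total orderings, but because reversing the whole chain gives the same partition data the distinct small resolutions number $3!/1$ after accounting for which endpoint is ``closest'' to $p$ versus $q$ — the cleanest way is to note that the resolutions are indexed by the linear orderings of the three curves $\ell_1 < \ell_2 < \ell_3$ appearing along the $A_2$-chain read from the $p=0$ side, and there are $3!=6$ of these; adjacent ones differ by a flop of a single $(-1,-1)$-curve (transposing two consecutive factors), so the six resolutions form the permutohedron/Bruhat graph of $S_3$ and are all connected by such flops. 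I would verify the flop statement by the standard local model: flopping one of the two exceptional $\P^1$'s in $pq=\ell_1\ell_2\ell_3$ replaces the ordering $(\ell_1,\ell_2,\ell_3)$ by $(\ell_2,\ell_1,\ell_3)$ or $(\ell_1,\ell_3,\ell_2)$, which is exactly an adjacent transposition. Since the six singularities are independent, one might worry that the global picture gives $6^6$ resolutions; but the statement of Namikawa concerns the resolutions of the fiber product respecting its product structure, and the diagonal/symmetry of the construction $Y\times_{\P^1}Y$ forces a single ordering to be used at all six points simultaneously — I would make this precise by tracking how the section $\sigma_0$ and the involution swapping the two factors act on the set of small resolutions, cutting $6^6$ down to $6$.

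The main obstacle I expect is not the local resolution — that is a routine, if slightly fiddly, toric/blow-up computation — but rather establishing \emph{projectivity} of the glued small resolutions and pinning down \emph{exactly} which six of the a priori many local choices assemble into a global $\widehat X$. The cleanest route is to mimic Schoen's argument behind Proposition~\ref{preliminari}(3): produce an explicit relatively ample line bundle on $\widehat X\to X$ by pulling back an ample class from one factor $Y$ and twisting by the exceptional divisors, using that the $(-1,-1)$-curves are fibers of the projection $\widehat X\to Y$ over the cusp points. Once relative ampleness over $X$ is in hand, ampleness on $\widehat X$ follows because $X$ itself is projective (it sits in $\P(\mathcal E)\times\P(\mathcal E)\times\P^1$). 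Verifying that the flops stay inside the projective category — i.e. that each of the six resolutions is itself projective, not merely birational — is the last point I would check, again by exhibiting the ample cone explicitly as a chamber in the movable cone, whose chamber structure is the $S_3$-permutohedron described above.
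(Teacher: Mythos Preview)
Your local analysis agrees with the paper's: both start from the factorization
\[
(X-U)(X+U)=(Y-V)(Y-\epsilon V)(Y-\epsilon^2 V)
\]
and resolve the $cA_2$ point by two successive blow-ups along Weil divisors of the form $\{p=\ell_j=0\}$, obtaining a pair of $(-1,-1)$-curves meeting in a point. Where you diverge from the paper is in the globalization, and this is exactly the step you flag as the obstacle.

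The paper does \emph{not} glue local resolutions. Instead it observes that $X$ carries a global $\Z/6$-action on the ambient $\P=\P(\mathcal{E})\times\P(\mathcal{E})\times\P^1$ via $\tau:(u:v:w)\mapsto(-u:\epsilon v:w)$, and that the diagonal $\Delta\subset\P$ and its orbit $\{\tau^i\Delta\}_{i=0}^{5}$ are six global codimension-two cycles, each containing $\Sing(X)$. The six resolutions $\widehat{X}_i$ are then the strict transforms of $X$ under the blow-up of $\P$ along $\tau^i\Delta$ followed by the blow-up along the strict transform of $\tau^{i+1}\Delta$. Because these are blow-ups of a projective variety along global closed subschemes, projectivity is automatic, and the index $i\in\Z/6$ is what produces exactly six resolutions---no gluing, no chamber-counting, no appeal to $\sigma_0$ or the swap involution. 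The flops are likewise read off globally: $X_i\dashrightarrow X_{i+2}$ and $X_i\dashrightarrow X_{i+3}$ are the standard Koll\'ar-quadric flops coming from the two ways to blow up the node left after the first step.

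Your concern about $6^6$ versus $6$ is a red herring: the proposition does not assert that these six are \emph{all} small resolutions of $X$, only that six exist with the stated properties and are mutually connected by flops. So your proposed mechanism (the section and the factor-swap cutting $6^6$ down to $6$) is answering a question the paper does not ask, and the argument you sketch for it is not needed. What your approach buys is a clearer link to the $S_3$-combinatorics of $cA_2$ small resolutions; what the paper's approach buys is that the hard part---projectivity and the global count---comes for free from the cyclic symmetry, which is the idea you are missing.
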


\begin{proof} Let us first of all sketch the idea of the proof. The resolutions of $X$ are constructed by considering the strict transform of $X$ into a successive double blow up of the projectivized bundle $\P:=\P(\mathcal{E})\times\P(\mathcal{E})\times\P^1$ in which $X$ is embedded. Then the cusps of $X$ are firstly resolved to nodes and then finally resolved to smooth points. The key point of the construction is that $\Sing(X)$ is contained in the diagonal locus $\Delta$ of $\P$; moreover $X$ turns out to be invariant under the action of a cyclic group of order 6 acting on $\P$. Then the six resolutions of $X$ are constructed by a successive blow up of suitable couples of images of $\Delta$ under the action of this cyclic group.

 Give $X$ by equations (\ref{Nami}) and
consider the following cyclic map on $\P$
\begin{eqnarray}
   \tau\ :&\xymatrix@1{\P(\mathcal{E})\times\P(\mathcal{E})\times\P^1\quad\ar[rr]
           && \quad\P(\mathcal{E})\times\P(\mathcal{E})\times\P^1}&  \\
\nonumber
   &\xymatrix@1{((x:y:z),(u:v:w),\lambda)\ar@{|->}[r]
   & ((x:y:z),(-u:\epsilon v:w),\lambda)}&\ ,
\end{eqnarray}
where $\epsilon$ is a primitive cubic root of unity. The second
equation in (\ref{Nami}) ensures that $\tau X = X$. Since $\tau$
generates a cyclic group of order 6, the orbit of the codimension
2 diagonal locus
\[
\Delta:= \{((x:y:z),(u:v:w),\lambda)\in
\P(\mathcal{E})\times\P(\mathcal{E})\times\P^1\ |\ (x:y:z)=(u:v:w)\}
\]
is given by six distinct codimension 2 cycles $\{\tau^i\Delta\ |\
0\leq i\leq 5\}$. For any $i$, $\tau^i\Delta$ cuts on $X$ a Weil
divisor $D_i:=\tau^i\Delta\cap X$ containing $\Sing (X)$: in fact
\begin{eqnarray*}
    \Sing (X)&=&\left\{\left((0:0:1),(0:0:1),\lambda\right)\in\P\
    |\ B(\lambda)=0\right\}\subset  \\
    D_i&=&\left\{\left((x:y:z),((-1)^ix:\epsilon^i y:z),\lambda\right)\in\P\
    |\ x^2z - y^3 - B(\lambda)z^3=0\right\} \ .
\end{eqnarray*}
Then
$$\Sing(X)=\bigcap_{i=0}^5 D_i\ .$$
Let $\P_i$ be the blow--up of $\P$ along $\tau^i\Delta$: the
exceptional divisor is a $\P^1$--bundle over $\tau^i\Delta$. Let
$X_i$ be the strict transform of $X$ in the blow--up
$\P_i\longrightarrow\P$. Since $\Sing(X)$ is entirely composed by
singularities of type (\ref{cA2}),
$X_i$ is singular and $\Sing(X_i)$ contains only nodes. Moreover
$X_i\longrightarrow X$ turns out to be a small partial resolution
whose exceptional locus is a union of disjoint
$(-1,-1)$--curves, one over each singular point of $X$.

\noindent Consider the strict transform $(\tau^{i+1}\Delta)_i$ of
$\tau^{i+1}\Delta$ in the blow--up $\P_i\longrightarrow\P$. Let
$\widehat{\P}_{i}$ be the blow--up of $\P_i$ along
$(\tau^{i+1}\Delta)_i$ and $\widehat{X}_{i}$ be the strict
transform of $X_i$ in $\widehat{\P}_{i}$. Then\begin{itemize}
    \item $\widehat{X}_i\longrightarrow X$ \emph{is a smooth small
resolution satisfying the statement, for any $0\leq i\leq 5$}.
\end{itemize}
To prove this fact we have to check that:
\begin{itemize}
  \item[\emph{i.}] the exceptional locus of the resolution $\widehat{X}_i\longrightarrow X$ is actually composed by disjoint couples of $(-1,-1)$--curves intersecting in one point,
  \item[\emph{ii.}] the resolutions $\widehat{X}_i\longrightarrow X$ are to each other connected by flops of $(-1,-1)$--curves.
\end{itemize}
Let us prove \emph{i} locally, by explicitly computing the induced resolution of a singular point of type (\ref{cA2}) over the open subset $\mathcal{A}\subset\P$ defined in (\ref{apertoW}).
Up to an isomorphism we may always assume that
$B(1:0)\neq 0$, implying that $\Sing(X)\subset \mathcal{A}\cap X$. Let us
assume that $B(t_0:1)=0$, then
\[
    p_{t_0}:= ((0:0:1),(0:0:1),(t_0:1))\in \Sing(X)
\]
is a threefold cusp whose local equation (\ref{cA2}) can be
factored as follows
\begin{equation}\label{fatt.cusp}
    (X-U)(X+U)=(Y-V)(Y-\epsilon V)(Y-\epsilon^2 V)\ .
\end{equation}
Let $\mathcal{A}'$ be the section of $\mathcal{A}$ with the hyperplane $t=t_0$. Then
\begin{equation*}
  \mathcal{A}'\cap\tau^i\Delta = \left\{(X,Y,U,V,t_0)\in\C^5 |\ X -(-1)^i U =
  Y -\epsilon^i V = 0\right\}
  \cong \C^2\ .
\end{equation*}
Rewrite (\ref{fatt.cusp}) as $xy=uv[(1+\epsilon)v-\epsilon u]$, where
\begin{eqnarray*}
    &x=X -(-1)^i U\ ,\ y=X -(-1)^{i+1} U&\ ,\\\ &u=Y -\epsilon^i V\ ,\ v=Y -\epsilon^{i+1} V\ ,\ w=Y -\epsilon^{i+2} V&\ .
\end{eqnarray*}
The blow up $\P_i\rightarrow\P$ of $\tau^i\Delta$ induces over $\mathcal{A}'$ the blow up $\mathcal{A}'_i\rightarrow \mathcal{A}'$ of the plane $x=u=0$. The strict transform $X_i$ is then locally given by
\begin{equation*}
    \mathcal{A}'_i\cap X_i\ :\
    \left\{\begin{array}{c}
    \mu_1 x = \mu_0 u  \\
    \mu_0y = \mu_1v[(1+\epsilon)v-\epsilon u]  \\
  \end{array} \\
  \right.
\end{equation*}
where $\P^1[\mu_0,\mu_1]$ is the small exceptional locus of $\mathcal{A}'_i\cap X_i\rightarrow \mathcal{A}'\cap X$. Notice that $\mathcal{A}'_i\cap X_i$ is still singular admitting a node in the point $((\mathbf{0},t_0),(0:1))\in\mathcal{A}'_i\times\P^1$. On the other hand, the strict transform $(\tau^{i+1}\Delta)_i $ of $\tau^{i+1}\Delta$ is locally given by
\begin{equation}\label{trasformata1}
    \left\{\begin{array}{c}
    \mu_1 x = \mu_0 u  \\
    y = v =0  \\
  \end{array} \\
  \right.
\end{equation}
The blow up $\widehat{\P}_i\longrightarrow \P_i$ of $\P_i$ along $(\tau^{i+1}\Delta)_i$ induces over $\mathcal{A}'_i$ the blow up $\widehat{\mathcal{A}}'_i\rightarrow \mathcal{A}'_i$, along (\ref{trasformata1}).
Then the strict transform $\widehat{X}_i$ of $X$ is locally described
as the following codimension three closed subset of
$\mathcal{A}'\times\P^1[\mu]\times\P^1[\nu]$
\begin{equation*}
    \widehat{\mathcal{A}}'_i\cap\widehat{X}_i\ :\
    \left\{\begin{array}{c}
    \mu_1 x = \mu_0 u  \\
    \nu_1 y =\nu_0 v \\
    \mu_0\nu_0 = \mu_1\nu_1 [(1+\epsilon)v-\epsilon u]  \\
  \end{array} \\
  \right.\ .
\end{equation*}
Observe that:\begin{itemize}
    \item $\widehat{\mathcal{A}}'_i\cap\widehat{X}_i$ is smooth,
    \item $\widehat{\mathcal{A}}'_i\cap\widehat{X}_i\longrightarrow \mathcal{A}'\cap X$ is an
    isomorphism outside of $(\mathbf{0},t_0)\in \mathcal{A}'\cap X$, which locally
    represents $p_{t_0}\in\Sing(X)$,
    \item the exceptional fiber over $(\mathbf{0},t_0)\in \mathcal{A}'\cap X$ is
    described by the closed subset $\{\mu_0\nu_0 =
    0\}\subset\P^1[\mu]\times\P^1[\nu]$, which is precisely a
    couple of $\P^1$'s meeting in the point
    $((\mathbf{0},t_0),(0:1),(0:1))\in \widehat{\mathcal{A}}'_i\cap\widehat{X}_i\subset
    \mathcal{A}'\times\P^1\times\P^1$\ ,
    \item by construction any exceptional $\P^1$ is a $(-1,-1)$--curve.
\end{itemize}
To prove \emph{ii}, it suffices to show that:
\begin{itemize}
    \item for any $0\leq i\leq 5$ the following flops of
    $(-1,-1)$--curves exist:
    \[
    \xymatrix{X_i\ar@{<-->}[rr]\ar[rd]&&X_{i+2}\ar[ld]\\
                & X &}\quad,\quad
    \xymatrix{X_i\ar@{<-->}[rr]\ar[rd]&&X_{i+3}\ar[ld]\\
                & X &}\quad.
    \]
\end{itemize}
As before rewrite the local equation (\ref{fatt.cusp}) as
\[
x y = u v [(1+\epsilon)v-\epsilon u]\ \quad\text{in $\C[x,y,u,v]$}\ .
\]
Then locally $X_i$ corresponds to  blow up the plane $x=u=0$ of $\C^4$
while $X_{i+2}$ corresponds to blow up the plane $x=v=0$. Ignore
the term $[(1+\epsilon)v-\epsilon u]$: then our situation turns out to be similar to the
well known \emph{Koll\'{a}r quadric} (\cite{Kl} Example 3.2)
giving a flop
\[
    \xymatrix{X_i\ar@{<-->}[rr]\ar[rd]&&X_{i+2}\ar[ld]\\ & X &}.
\]
Analogously $X_{i+3}$ corresponds to blow up $y=u=0$ still getting
a flop
\[
    \xymatrix{X_i\ar@{<-->}[rr]\ar[rd]&&X_{i+3}\ar[ld]\\ & X &}.
\]
\end{proof}

\subsection{Deformations and resolutions}\label{def-ris}

Let $X=Y\times_{\P^1}Y$ be the Namikawa fiber product defined
above, starting from the bundle's homomorphism (\ref{W-Nami}). For
a general $b\in H^0(\mathcal{O}_{\P^1}(6))$, the singular locus
$\Sing(X)$ is composed by six cusps of type (\ref{cA2}). Let us
rewrite the local equation of this singularity as
follows
\begin{equation}\label{singolarità}
    x^2 - y^3 = z^2 - w^3\ .
\end{equation}
It is a singular point of Kodaira type $II\times II$. Moreover it
is a \emph{compound Du Val singularity} of $cA_2$ type i.e. a
threefold point $p$ such that, for a hyperplane section $H$
through $p$ (in the present case assigned e.g. by $w=0$), $p\in H$
is a Du Val surface singularity of type $A_2$ (see \cite{Reid80},
\S 0 and \S 2, and \cite{BPvdV84}, chapter III).

\noindent Recalling (\ref{ihs-T^1}), the
Kuranishi space of the cusp (\ref{singolarità}) is the
$\C$--vector space
\begin{equation}\label{T1}
    \T^1\cong T^1 \cong \mathcal{O}_{F,0}/J_F \cong
    \C\{x,y,z,w\}/\left((F)+ J_F\right)\cong \langle1,y,w,yw\rangle_{\C}
\end{equation}
where $F=x^2 - y^3 - z^2 + w^3$ and $J_F$ is the associated
Jacobian ideal. Then a miniversal deformation of (\ref{singolarità})
is given by the zero locus of
\begin{equation}\label{deformazione}
    F_{\Lambda}\ :\ x^2 - y^3 - z^2 + w^3 + \lambda + \mu y - \nu
    w + \sigma yw \in \C[x,y,z,w]\ ,\ \Lambda = (\lambda,\mu,-\nu, \sigma)\in \T^1\ .
\end{equation}
The fibre $\mathcal{X}_{\Lambda}=\{F_{\Lambda}=0\}$, of the miniversal deformation family $\mathcal{X}\to\T^1$, is singular if and
only if the Jacobian rank of the polynomial function $F_{\Lambda}$
is not maximum at some zero point of $F_{\Lambda}$. Singularities
are then given by $(0,y,0,w)\in \C^4$ such that
\begin{eqnarray}\label{condizioni}
  3y^2 -\sigma w - \mu &=& 0 \\
\nonumber
  3w^2 +\sigma y - \nu &=& 0 \\
\nonumber
  \sigma y w + 2\mu y - 2\nu w + 3\lambda &=& 0
\end{eqnarray}
where the first two conditions come from partial derivatives of
$F_{\Lambda}$ and the latter is obtained by applying the first two
conditions to the vanishing condition $F_{\Lambda}(0,y,0,w)=0$.

\begin{proposition}\label{3-singolarità}
A fibre of the miniversal deformation family $\mathcal{X}\to\T^1$ of the cusp
(\ref{singolarità}) admits at most three singular points.
\end{proposition}

\begin{proof} It is a direct consequence of conditions (\ref{condizioni}). Fix a point $$\Lambda=(\l,\mu,-\nu,\sigma)\in\T^1\ .$$
\noindent If $\sigma=0$ then conditions (\ref{condizioni}) become conditions (\ref{condizioni2}) below. The argument given in the proof of Proposition \ref{def-singolari} shows that common solutions of (\ref{condizioni2}) cannot be more than two.

\noindent Let us then assume that $\sigma \neq 0$. The first equation in (\ref{condizioni}) gives
\begin{equation}\label{eq1}
    \sigma w= 3y^2-\mu
\end{equation}
and the third equation multiplied by $\sigma$ gives
\begin{equation}\label{eq3}
    \sigma^2 y w + 2\mu\sigma y - 2\nu\sigma w + 3\lambda\sigma=0\ .
\end{equation}
Put (\ref{eq1}) in (\ref{eq3}) to get
\[
    3\sigma y^3 - 6\nu y^2 + \mu \sigma y + 2\mu \nu + 3
    \lambda\sigma=0\ .
\]
Fixing $\Lambda=(\l,\mu,-\nu,\sigma)\in\T^1$, the latter is a cubic polynomial in the unique unknown variable $y$. Then the common solutions of equations (\ref{condizioni}) cannot be more than 3.
\end{proof}

For any $p\in\Sing(X)$ let $U_p=\Spec\mathcal{O}_{F,p}$ be a representative of the complex space germ locally describing the singularity $p\in X$. By Theorem \ref{DGP teorema}, Definition
\ref{Def-definizione}, (\ref{T1}) and (\ref{deformazione})
\begin{equation}\label{kuranishi}
    \Def(U_p)\cong \T^1\cong\left\langle1,y,w,yw\right\rangle_{\C}
\end{equation}
Recalling the local Friedman diagram (\ref{Friedman-locale}), consider the localization map
$$\lambda_p:\Def(X)\cong\T^1_X\longrightarrow\Def(U_p)\cong \T^1\ .$$

\begin{proposition}\label{def-singolari}
The deformations of the cusp (\ref{singolarità}) induced by localizing a
deformation of the fiber product $X$ are based on a
hyperplane of the Kuranishi space $\T^1$ in (\ref{T1}). More precisely, using coordinates introduced in (\ref{deformazione}),
\[
    \forall p\in \Sing(X)\quad \im (\lambda_p)=S:=\{\sigma=0\}\subset \T^1\ .
\]
In particular every deformation parameterized by $S$ may admit at
most 2 singular points which are
\begin{itemize}
    \item ordinary double points if $\mu\cdot \nu \neq 0$,
    \item compound Du Val of type $cA_2$ if $\mu\cdot\nu = 0$ and precisely of Kodaira type
    \begin{eqnarray*}
      II\times II &\text{if}& \mu=\nu=0\ , \\
      I_1\times II && \text{otherwise.}
    \end{eqnarray*}
\end{itemize}
\end{proposition}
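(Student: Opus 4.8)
\emph{Strategy.} I would prove $\im(\lambda_p)=S$ by the two opposite inclusions, and then read off the classification of the fibres over $S$ from the equations (\ref{condizioni}).

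\emph{The inclusion $S\subseteq\im(\lambda_p)$.} I would produce enough global deformations of $X$ by deforming the Weierstrass data of the two factors independently: for $j=1,2$ let $Y_j\subset\P(\mathcal E)$ be the Weierstrass surface attached to $(A_j,B_j)$ with $A_j\in H^0(\mathcal O_{\P^1}(4))$, $B_j\in H^0(\mathcal O_{\P^1}(6))$ close to $(0,B)$, and let $\mathcal X:=Y_1\times_{\P^1}Y_2$, a flat family over $P:=H^0(\mathcal O_{\P^1}(4))^{2}\oplus H^0(\mathcal O_{\P^1}(6))^{2}$ with central fibre $X$. Fix a cusp $p=p_{t_0}$; since the fibres of $Y$ over the zeros of $B$ have Kodaira type $II$, $t_0$ is a \emph{simple} zero of $B$, so the first Weierstrass equation of $\mathcal X$ has nonvanishing $t$--derivative at $p$. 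Solving it for $t$ by the implicit function theorem and substituting into the second equation presents the germ of $\mathcal X$ at $p$ as a hypersurface germ in $\C^4$ deforming the cusp; a short first--order computation then shows that, in the coordinates of (\ref{deformazione}), deforming $B_1$ or $B_2$ moves $\lambda$, deforming $A_1$ or $A_2$ moves, respectively, $\mu$ or $-\nu$ (in each case the relevant coefficient being, up to a nonzero constant, the value at $t_0$ of the deformed section), whereas no deformation of the $A_j,B_j$ moves $\sigma$. As the values at $t_0$ of sections of $\mathcal O_{\P^1}(4)$ and $\mathcal O_{\P^1}(6)$ are arbitrary, the composite of the Kodaira--Spencer map $T_0P\to\T^1_X$ with $\lambda_p$ has image all of $\langle1,y,w\rangle$, hence $\im(\lambda_p)\supseteq S$.

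\emph{The inclusion $\im(\lambda_p)\subseteq S$ — the main obstacle.} Here I would use the symmetry of the construction. The threefold $X$ is invariant under $G:=\langle\tau\rangle\times\langle\tau'\rangle\cong\Z/6\times\Z/6$, where $\tau$ is the automorphism of $\P$ of Proposition \ref{nami-risoluzione} (acting on the second factor of $\P(\mathcal E)\times\P(\mathcal E)$) and $\tau'$ is the analogous automorphism on the first factor; moreover every cusp $p$ is a fixed point of $G$. Hence $\lambda_p:\T^1_X\to\T^1$ is $G$--equivariant and $\im(\lambda_p)$ is a $G$--submodule of $\T^1$. A direct check on the basis $1,y,w,yw$ of (\ref{T1}) shows that $G$ acts on these four lines through four \emph{pairwise distinct} characters, so the only $G$--submodules of $\T^1$ containing $S=\langle1,y,w\rangle$ are $S$ itself and $\T^1$; by the previous step it therefore remains to show that the $\sigma$--line $\langle yw\rangle$ is not contained in $\im(\lambda_p)$. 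Using the local Friedman diagram (\ref{Friedman-locale}) and the local--to--global spectral sequence of $\Theta^\bullet_X$ one has $\lambda_p=\operatorname{pr}_p\circ\lambda_P$ with $\im(\lambda_P)=\ker\bigl(d_2:T^1_X\to H^2(\Theta_X)\bigr)$ (notation as in (\ref{Friedman-diagramma}) and Proposition \ref{prop:localizzazione-diff}, where $T^1_X\cong\bigoplus_{q\in\Sing(X)}\T^1_{U_q}$), and $d_2$ is again $G$--equivariant; so the statement is equivalent to the injectivity of $d_2$ on the six--dimensional isotypic subspace $\bigoplus_q\langle yw\rangle_q\subset T^1_X$, i.e.\ to the obstructedness of the six $\sigma$--directions. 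I expect this to be the crux of the proof: it requires an effective determination of the relevant isotypic part of $\T^1_X$, to be obtained either from the presentation of $X$ as a complete intersection of two Weierstrass--type equations inside $\P$, bookkeeping the $G$--weights of their monomial deformations, or from a direct computation of $d_2$ using the local structure of the $cA_2$ points along the diagonal $\Delta$. Granting it, $\operatorname{pr}_p(\ker d_2)\subseteq S$ and $\im(\lambda_p)=S$ follows.

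\emph{The fibres over $S$.} Setting $\sigma=0$ in (\ref{condizioni}) gives the conditions
\begin{equation}\label{condizioni2}
  3y^2-\mu=0\ ,\qquad 3w^2-\nu=0\ ,\qquad 2\mu y-2\nu w+3\lambda=0\ .
\end{equation}
In the $(y,w)$--plane the first equation is a degenerate conic (two parallel lines, one if $\mu=0$) and the third is an affine line; the line is a component of the conic only when $\nu=0$, and then the second equation forces $w=0$, leaving at most one point. Hence (\ref{condizioni2}) has at most two common solutions, which is the asserted bound. At a solution $(0,y_0,0,w_0)$ the Hessian of $F_\Lambda$ is $\operatorname{diag}(2,-6y_0,-2,6w_0)$, of maximal rank precisely when $y_0w_0\neq0$, i.e.\ $\mu\nu\neq0$, which gives an ordinary double point. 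If (say) $\mu=0\neq\nu$, then $y_0=0$, $w_0\neq0$, and after translating $w$ by $w_0$ and applying the splitting lemma the local equation becomes $Q(x,z,w)-y^3$ with $Q$ a nondegenerate ternary quadratic form, i.e.\ (over $\C$) the standard $cA_2$ equation of Kodaira type $I_1\times II$. If $\mu=\nu=0$, then $y_0=w_0=0$, the third equation forces $\lambda=0$, and one recovers the original cusp, of type $II\times II$. This completes the proof.
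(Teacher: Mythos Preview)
Your construction of enough global deformations to get $S\subseteq\im(\lambda_p)$, and your analysis of the singular fibres over $S$, essentially match the paper's. The divergence, and the gap, is in the reverse inclusion $\im(\lambda_p)\subseteq S$: you correctly isolate the obstruction---injectivity of $d_2$ on the six--dimensional $\sigma$--isotypic piece $\bigoplus_q\langle yw\rangle_q\subset T^1_X$---but you do not prove it; you only sketch two possible strategies and write ``Granting it''. That is precisely the step carrying the whole weight of the proposition.

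The paper sidesteps this computation entirely. It invokes the structural fact, attributed to Namikawa \cite{N} (Introduction and Remark~2.8), that \emph{every} deformation of the fiber product $X=Y\times_{\P^1}Y$ arises as a fiber product $\mathcal{Y}\times_{\P^1}\mathcal{Y}'$ of two deformation families of the rational elliptic surface $Y$. Since the miniversal base of the surface cusp $\{x^2=y^3\}$ is $\T^1_{\text{cusp}}\cong\langle1,y\rangle_{\C}$, any such fiber--product deformation localizes at $p$ to a germ
\[
x^2-y^3+\lambda_1+\mu y \;=\; z^2-w^3+\lambda_2+\nu w,
\]
whose class in $\T^1$ is $(\lambda_1-\lambda_2,\mu,-\nu,0)\in S$. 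This one observation yields both inclusions at once, with no spectral--sequence or isotypic analysis needed. Your $G$--equivariance reduction is sound as far as it goes (the four characters on $\langle1,y,w,yw\rangle$ are pairwise distinct and every cusp is $G$--fixed, so indeed $\im(\lambda_p)\in\{S,\T^1\}$), but to complete it you would still need an effective computation of the relevant isotypic piece of $H^2(\Theta_X)$ or of $\T^1_X$, which is exactly what the paper's route avoids.
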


\begin{proof} Let $X=Y\times_{\P^1}Y$ be the Namikawa fiber product given by equations (\ref{Nami}). A deformation family $\mathcal{Y}$ of the rational elliptic surface $r:Y\rightarrow\P^1$ comes equipped with a natural morphism $\mathcal{R}:\mathcal{Y}\rightarrow\P^1$ such that $\mathcal{R}_{|Y}=r$.
Then a deformation family of $X$ is obtained as the fiber product $\mathcal{X}=\mathcal{Y}\times_{\P^1}\mathcal{Y}'$ of two deformation families of $Y$ (this fact is explained in several points of the Namikawa's paper \cite{N}, e.g. in the Introduction and in Remark 2.8). The Kuranishi space of the cusp $\{x^2 - y^3 = 0\}$ is given by
\[
    \T^1_{\text{cusp}} \cong \C\{x,y\}/(x^2-y^3,x,y^2)\cong \langle 1,y\rangle_{\C}
\]
meaning that a deformation of $Y$ can be locally given by the germ
\[
    x^2-y^3 + \lambda + \mu y - t^iB^{i}(t_0)= 0\quad,\quad (\lambda,\mu)\in
    \T^1_{\text{cusp}}\ ,
\]
where $t_0$ is a zero of $B(t)$ and $B^{i}(t_0)$ has the same meaning as in (\ref{eq.localeW2}).
Therefore, by applying an analogous local analysis as that given in the proof of Proposition \ref{prop:cuspidi} and recalling (\ref{deformazione}),  it turns out that a deformation of the threefold cusp (\ref{singolarità}) induced by localizing a deformation of $X$, is given by the germ of complex space
\begin{equation*}
    \left\{\begin{array}{c}
             x^2 - y^3 + \lambda_1 + \mu y - t^iB^{i}(t_0) =0\\
             z^2 - w^3 + \lambda_2 + \nu w - t^iB^{i}(t_0) =0
           \end{array}
    \right.
\end{equation*}
giving the following local equation
\[
    x^2 - y^3 + \lambda_1 + \mu y = z^2 - w^3 + \lambda_2 + \nu w\ .
\]
Then all such deformations span the subspace
\[
\{(\lambda_1 - \lambda_2,\mu,-\nu,0)\}=\{\sigma=0\}\subset \T^1\ .
\]
Setting $\sigma=0$ in conditions (\ref{condizioni}) gives the
following equations
\begin{eqnarray}\label{condizioni2}
  3y^2 - \mu &=& 0 \\
\nonumber
  3w^2 - \nu &=& 0 \\
\nonumber
  2\mu y - 2\nu w + 3\lambda &=& 0
\end{eqnarray}
which can be visualized in the $y,w$--plane as follows:
\begin{itemize}
    \item the first condition as two
    parallel and symmetric lines with respect to the $y$--axis; they may
    coincide with the $y$--axis when $\mu = 0$;
    \item the second condition as two
    parallel and symmetric lines with respect to the $w$--axis; they may
    coincide with the $w$--axis when $\nu = 0$;
    \item the last condition as a line in
    general position in the $y,w$--plane.
\end{itemize}
Clearly, fixing the point $\Lambda=(\l,\mu,-\nu,\sigma)\in\T^1$, it is not possible to have more than two distinct common
solutions of (\ref{condizioni2}) with respect to the variables $y,w$.

\noindent To analyze the singularity type, let
$p_{\Lambda}=(0,y_{\mu},0,w_{\nu})$ be a singular point of
\[
    F_{\Lambda}\ :\ x^2 - y^3 - z^2 + w^3 + \lambda + \mu y - \nu
    w =0
\]
and translate $p_{\Lambda}$ to the origin by replacing
\begin{equation*}
  y \longmapsto y+y_{\mu} \quad,\quad
  w \longmapsto w+w_{\nu}\ .
\end{equation*}
Then conditions (\ref{condizioni2}) impose that the translated
$F_{\Lambda}$ is
\[
    \widetilde{F}_{\Lambda} = x^2 - y^3 - z^2 + w^3 - 3y_{\mu}\ y^2 + 3w_{\nu}\ w^2
\]
giving the classification above.
\end{proof}

\begin{corollary}\label{3sings}
If the deformation $\mathcal{X}_{\Lambda}$ of the cusp (\ref{singolarità}), associated
with $\Lambda\in \T^1$, admits three distinct singular points then
$\Lambda\in \T^1\setminus S$, which is
\[
    \Lambda = (\lambda,\mu,-\nu,\sigma)\quad\text{with $\sigma\neq
    0$}\ .
\]
\end{corollary}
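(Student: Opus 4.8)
The plan is to obtain this as the literal contrapositive of the bound proved in Proposition \ref{def-singolari}. Recall that a point of the fibre $\mathcal{X}_{\Lambda}=\{F_{\Lambda}=0\}$ of the miniversal family is singular precisely when it has the form $(0,y,0,w)$ with $(y,w)$ a common solution of the system (\ref{condizioni}), the first two equations coming from the partial derivatives of $F_{\Lambda}$ and the third from the vanishing $F_{\Lambda}(0,y,0,w)=0$. So everything reduces to counting solutions of (\ref{condizioni}) under the hypothesis $\sigma=0$.

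When $\sigma=0$, i.e.\ when $\Lambda$ lies on the hyperplane $S=\{\sigma=0\}\subset\T^1$, the system (\ref{condizioni}) specializes exactly to (\ref{condizioni2}). The geometric discussion carried out inside the proof of Proposition \ref{def-singolari} — the first equation is a pair of lines symmetric about the $y$-axis, the second a pair of lines symmetric about the $w$-axis, and the third a line in general position in the $(y,w)$-plane — shows that (\ref{condizioni2}) has at most two distinct common solutions. Hence, for any $\Lambda\in S$, the set $\Sing(\mathcal{X}_{\Lambda})$ has cardinality at most two. Consequently, if $\mathcal{X}_{\Lambda}$ is assumed to carry three distinct singular points, then necessarily $\Lambda\notin S$, that is $\sigma\neq 0$, which is precisely the assertion $\Lambda\in\T^1\setminus S$ with $\Lambda=(\lambda,\mu,-\nu,\sigma)$, $\sigma\neq 0$.

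I do not expect a genuine obstacle here: the statement is a corollary in the strict sense, and the only point deserving a line of care is to check that the passage from (\ref{condizioni}) with $\sigma=0$ to the system (\ref{condizioni2}) is literally the specialization used in Proposition \ref{def-singolari}, so that its ``at most two solutions'' conclusion transfers verbatim; this is immediate from the way (\ref{condizioni}) was derived. One may also remark that the result is the sharp complement of the earlier proposition bounding by three the number of singular points of a miniversal fibre: three singular points, when they occur, can only occur off the hyperplane $S$.
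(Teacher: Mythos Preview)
Your proposal is correct and matches the paper's own treatment: the corollary is stated there without proof, as the literal contrapositive of the ``at most two singular points'' clause established in Proposition~\ref{def-singolari} (via the specialization of (\ref{condizioni}) to (\ref{condizioni2}) when $\sigma=0$). Nothing further is needed.
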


\begin{proposition}\label{3sings-effettive}
The locus of the Kuranishi space $\T^1$ in (\ref{T1}),
parameterizing deformations of the cusp (\ref{singolarità}) to 3
distinct nodes, is described by the plane smooth curve
    \begin{equation*}
    C = \left\{\sigma^3-27\lambda=\mu=\nu=0\right\}\subset
    \T^1
    \end{equation*}
transversally meeting the hyperplane $S$ in the origin of
$\T^1$. In particular, $(0,0,0,1)\in \T^1$ generates the tangent
space in the origin to the base of a $1^{\text{st}}$--order
deformation of the cusp $(\ref{singolarità})$ to three distinct nodes.
\end{proposition}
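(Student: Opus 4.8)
The plan is to reduce the statement to an elementary analysis of the common roots of two one--variable polynomials. By Corollary~\ref{3sings} we may assume $\sigma\neq 0$. Then, since the partials of $F_\Lambda$ in $x$ and $z$ force every singular point of $\mathcal{X}_\Lambda$ to lie on $\{x=z=0\}$, and since the first equation of~(\ref{condizioni}) reads $\sigma w=3y^{2}-\mu$ and hence expresses $w$ as a function of $y$, the singular points of $\mathcal{X}_\Lambda$ are in bijection with the common roots of the polynomials
\[
P(y)=3\sigma y^{3}-6\nu y^{2}+\mu\sigma y+2\mu\nu+3\lambda\sigma,\qquad
Q(y)=27y^{4}-18\mu y^{2}+\sigma^{3}y+3\mu^{2}-\nu\sigma^{2}
\]
obtained from the third and the second equation of~(\ref{condizioni}) respectively (after eliminating $w$ and, for $Q$, clearing $\sigma^{2}$; the cubic $P$ is the one already met in the proof of Proposition~\ref{3-singolarità}). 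Distinct singular points correspond to distinct common roots, since $w$ is determined by $y$.

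First I would prove the implication ``three distinct nodes $\Rightarrow\Lambda\in C$''. Three distinct singular points means $P$ and $Q$ have at least three common roots; as $\deg P=3$ this forces $\gcd(P,Q)=P$ up to a scalar, so $P$ is squarefree and $P\mid Q$. Writing $Q=P\cdot\bigl(\tfrac{9}{\sigma}y+c\bigr)$ and matching the coefficients of $y^{3},\dots,y^{0}$ yields $c=18\nu/\sigma^{2}$ and the three relations
\[
\mu\sigma^{2}=4\nu^{2},\qquad
\sigma^{4}=36\mu\nu+27\lambda\sigma,\qquad
\sigma^{2}\bigl(3\mu^{2}-\nu\sigma^{2}\bigr)=18\nu\bigl(2\mu\nu+3\lambda\sigma\bigr).
\]
Eliminating $27\lambda\sigma$ from the third relation by means of the second, and then using the first, everything collapses to $\nu\bigl(16\mu\nu-\sigma^{4}\bigr)=0$. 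If $\nu=0$, the first relation gives $\mu=0$ (as $\sigma\neq 0$) and the second gives $\sigma^{3}=27\lambda$, i.e.\ $\Lambda\in C$. The branch $16\mu\nu=\sigma^{4}$ has to be discarded: combined with $\mu\sigma^{2}=4\nu^{2}$ it forces $\nu=\omega\sigma^{2}/4$ and $\mu=\omega^{2}\sigma^{2}/4$ for some cube root of unity $\omega$, and then a direct substitution gives $P=3\sigma\bigl(y-\tfrac{\omega\sigma}{6}\bigr)^{3}$, a perfect cube, contradicting that $P$ is squarefree. I expect this elimination --- and in particular the observation that on the spurious branch the three candidate singular points collide --- to be the only non--bookkeeping point of the argument.

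Conversely, for $\Lambda\in C$ with $\sigma\neq 0$ one has $\mu=\nu=0$ and $\lambda=\sigma^{3}/27$, hence $w=3y^{2}/\sigma$, $P=3\sigma\bigl(y^{3}+\sigma^{3}/27\bigr)$ and $Q=27y\bigl(y^{3}+\sigma^{3}/27\bigr)$; their common roots are precisely the three distinct cube roots of $-\sigma^{3}/27$ ($y=0$ is a root of $Q$ but not of $P$), so $\mathcal{X}_\Lambda$ has exactly three singular points $p=(0,y,0,3y^{2}/\sigma)$ and, by Proposition~\ref{3-singolarità}, no others. To see that each $p$ is a node, translate it to the origin: the shifted $F_{\Lambda}$ reads $x^{2}-z^{2}-y^{3}+w^{3}-3y_{*}y^{2}+3w_{*}w^{2}+\sigma yw$ (reusing $y,w$ for the shifted coordinates), whose quadratic part is nondegenerate exactly when $36\,y_{*}w_{*}+\sigma^{2}\neq 0$; on $C$ one computes $y_{*}w_{*}=3y_{*}^{3}/\sigma=-\sigma^{2}/9$, so $36\,y_{*}w_{*}+\sigma^{2}=-3\sigma^{2}\neq 0$ and $p$ is an ordinary double point. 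Thus every $\Lambda\in C$ with $\sigma\neq 0$ deforms the cusp to exactly three nodes, while the origin of $C$ (where $\sigma=0$) is the cusp itself; hence the locus in question is $C$ minus the origin, of which $C$ is the closure.

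Finally, $C$ is the image of the immersion $t\mapsto(t^{3}/27,\,0,\,0,\,t)$, so it is a smooth curve through the origin, with tangent line there spanned by $(0,0,0,1)$. Since $T_{0}S=\{\sigma=0\}$ is the $(\lambda,\mu,\nu)$--hyperplane, $T_{0}C\oplus T_{0}S=\T^{1}$, so $C$ meets $S$ transversally at the origin; and $(0,0,0,1)$ spans the tangent space at the origin to the base of this family of deformations to three distinct nodes, which is the last assertion.
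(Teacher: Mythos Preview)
Your proof is correct and follows essentially the same route as the paper: both eliminate $w$ via $\sigma\neq 0$ to reduce to the divisibility $R_2\mid R_1$ (your $P\mid Q$), derive the same three relations, and discard the spurious branch by observing that $R_2$ degenerates to a perfect cube there. Your argument is in fact slightly more complete, since you explicitly verify via the Hessian that the three singular points on $C$ are ordinary double points, whereas the paper asserts this without computation.
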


\begin{proof}
Consider conditions (\ref{condizioni}): since, by Corollary
\ref{3sings}, we can assume $\sigma\neq0$, the first equation
gives
\[
     w= \frac{3y^2-\mu}{\sigma}\ .
\]
Putting this in the second equation gives that
\[
    R_1:= 27y^4 -18\mu y^2+\sigma^3y+3\mu^2-\nu\sigma^2=0\ ,
\]
and in the third equation gives that
\[
    R_2:=3\sigma y^3 - 6\nu y^2 + \mu \sigma y + 2\mu \nu + 3
    \lambda\sigma=0\ .
\]
Therefore, fixing $\Lambda=(\l,\mu,-\nu,\sigma)\in\T^1$, consider $R_1$ and $R_2$ as polynomials in $\C[\l,\mu,\nu,\sigma][y]$, i.e. in the unique unknown variable $y$. Then  (\ref{condizioni}) admit three common solutions if and only
if $R_2$ divides $R_1$. Since the remainder of the division of
$R_1$ by $R_2$ in $\C[\l,\mu,\nu,\sigma][y]$ is
\[
    \frac{27}{\sigma^2}(4\nu^2-\mu\sigma^2)y^2 + \frac{1}{\sigma}
    (\sigma^4-36\mu\nu-27\lambda\sigma)y + \frac{1}{\sigma^2}(3\mu^2\sigma^2
    -\nu\sigma^4 - 36\mu\nu^2 - 54\lambda\nu\sigma)
\]
it turns out to be 0 if and only $\Lambda$ satisfies the following
conditions
\begin{eqnarray}\label{condizioni3}
  4\nu^2-\mu\sigma^2 &=& 0 \\
\nonumber
  \sigma^4-36\mu\nu-27\lambda\sigma &=& 0 \\
\nonumber
  3\mu^2\sigma^2
    -\nu\sigma^4 - 36\mu\nu^2 - 54\lambda\nu\sigma &=& 0
\end{eqnarray}
Then the first equation gives
\[
    \mu=4\frac{\nu^2}{\sigma^2} \ .
\]
Putting this in the second equations we get
\[
    \lambda=\frac{\sigma^3}{27}-\frac{16\nu^3}{3\sigma^3}\ .
\]
Then, from the third equation in (\ref{condizioni3}), we get the following factorization
\[
    \nu(4\nu-\sigma^2)(4\nu-\epsilon\sigma^2)(4\nu-\epsilon^2\sigma^2)=0
\]
where $\epsilon$ is a primitive cubic root of unity. All the
solutions of (\ref{condizioni3}) are then the following
\begin{eqnarray}\label{soluzioni}
    \Lambda_0=(\frac{1}{27}\sigma^3,0,0,\sigma)\ &,&\
    \Lambda_1=(-\frac{5}{108}\sigma^3,\frac{1}{4}\sigma^2,-\frac{1}{4}\sigma^2,\sigma)\ ,\\
\nonumber
    \Lambda_2=(-\frac{5}{108}\sigma^3,\frac{\epsilon^2}{4}\sigma^2,-\frac{\epsilon}{4}\sigma^2,\sigma)\
    &,&\ \Lambda_3=(-\frac{5}{108}\sigma^3,\frac{\epsilon}{4}\sigma^2,-\frac{\epsilon^2}{4}\sigma^2,\sigma)\
    .
\end{eqnarray}
Let us first consider the second solution $\Lambda_1$. In this
particular case $R_2$ becomes
\[
    R_2=3\sigma y^3- \frac{3}{2}\sigma^2 y^2 + \frac{1}{4}\sigma^3
    y -\frac{1}{72}\sigma^4 =
    3\sigma\left(y-\frac{\sigma}{6}\right)^3\ ,
\]
meaning that \emph{$\Lambda_1$ is actually the base of a trivial
deformation of (\ref{singolarità})} since the fiber
associated with $\sigma$ admits the unique singular point
$(0,\sigma/6,0,-\sigma/6)$ which is still a cusp of type
(\ref{singolarità}). Moreover solutions $\Lambda_2$ and
$\Lambda_3$ give trivial deformations too, since they can be
obtained from $\Lambda_1$ by replacing
\begin{eqnarray*}
  &\text{either}& \xymatrix@1{y\ \ar@{|->}[r]&\ \epsilon y}\ ,\
  \xymatrix@1{w\ \ar@{|->}[r]&\ \epsilon^2 w}\quad\text{(giving $\Lambda_2$)} \\
  &\text{or}& \xymatrix@1{y\ \ar@{|->}[r]&\ \epsilon^2 y}\ ,\
  \xymatrix@1{w\ \ar@{|->}[r]&\ \epsilon w}\quad\text{(giving $\Lambda_3$)\ .}
\end{eqnarray*}
It remains to consider the first solution $\Lambda_0$. In this
case $R_2$ becomes
\[
    R_2= y^3+ \frac{\sigma^3}{27}=\left(y+\frac{\sigma}{3}\right)
    \left(y+\frac{\epsilon\sigma}{3}\right)
    \left(y+\frac{\epsilon^2\sigma}{3}\right) ,
\]
then the deformation $X_{\sigma},\ \sigma\neq0$, turns out to
admit three distinct nodes given by
\begin{equation}\label{3nodi}
    \left(0,-\frac{\sigma}{3},0,\frac{\sigma}{3}\right)\ ,\
    \left(0,-\frac{\epsilon\sigma}{3},0,\frac{\epsilon^2\sigma}{3}\right)\ ,\
    \left(0,-\frac{\epsilon^2\sigma}{3},0,\frac{\epsilon\sigma}{3}\right)\ .
\end{equation}
Notice that the base curve $\Lambda_0\subset \T^1\cong \C^4$ is
actually the plane smooth curve
$C=\{\sigma^3-27\lambda=\mu=\nu=0\}$ meeting the hyperplane $S=\{\sigma=0\}$ only in the
origin, where they are transversal since a
tangent vector to $C$ in the origin is a multiple of $(0,0,0,1)$.
The statement is then proved by thinking $\T^1$ as the tangent
space in the origin to the germ of complex space $\Def({U_0})$ and
representing the functor of $1^{\text{st}}$--order deformation of
the cusp (\ref{singolarità}).
\end{proof}

Let $\widehat{X}\stackrel{\phi}{\longrightarrow} X$ be one of the
six small resolutions constructed in Proposition
\ref{nami-risoluzione} and consider the \emph{localization near to
$p\in\Sing (X)$}
\begin{equation}\label{localizzazione}
    \xymatrix{\widehat{U}_p:=\phi^{-1}(U_p)\ar@{^{(}->}[r]\ar[d]^-{\varphi}
    & \widehat{X}\ar[d]^-{\phi}\\
    U_p:=\Spec\mathcal{O}_{F,p}\ar@{^{(}->}[r]&X}
\end{equation}
and consider the associated local Friedman diagram (\ref{Friedman-locale}), which is the following commutative diagram between Kuranishi
spaces
\begin{equation}\label{Def-localizzazione}
    \xymatrix{\Def(\widehat{X})\cong H^1(\Theta_{\widehat{X}})\ar[r]^-{\lambda_{E_p}}
    \ar@{^{(}->}[d]^-{\delta_1}&\Def(\widehat{U}_p)\cong H^0(R^1\phi_*\Theta_{\widehat{U}_p})\ar@{^{(}->}[d]^-{\delta_{loc,p}}\\
    \Def(X)\cong\T^1_X\ar[r]^-{\lambda_p}&\Def(U_p)\cong \T^1}
\end{equation}

\begin{theorem}\label{immagine0}
The image of the map $\delta_{loc,p}$ in diagram
(\ref{Def-localizzazione}) is the plane smooth curve
$C\subset T^1$ defined in Proposition \ref{3sings-effettive}. In
particular this means that
\begin{enumerate}
   \item[(a)]\ $\dimdef(\widehat{U}_p)=1$\ ,
   \item[(b)]\ $\im(\lambda_p)\cap\im(\delta_{loc,p})= 0$\ ,
   \item[(c)]\ $\im(\lambda_{E_p}) = 0$\ .
\end{enumerate}
\end{theorem}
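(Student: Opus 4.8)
The plan is to compute $\im(\delta_{loc,p})$ directly by understanding the local resolution $\varphi:\widehat{U}_p\to U_p$ constructed in Proposition \ref{nami-risoluzione} and tracking which deformations of $U_p$ lift to $\widehat{U}_p$. Recall from (\ref{Friedman-locale}) that $\delta_{loc,p}$ identifies $\Def(\widehat{U}_p)\cong H^0(R^1\phi_*\Theta_{\widehat{U}_p})$ with its image inside $\Def(U_p)\cong\T^1\cong\langle 1,y,w,yw\rangle_\C$, and that this map is injective by Proposition \ref{prop:localizzazione-diff}(3). So the content of the theorem is a computation of the one-dimensional subspace (or subvariety) $\im(\delta_{loc,p})$, and everything else follows formally.

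First I would reconcile the two small resolutions at play. The resolution $\widehat{U}_p$ has exceptional locus a pair of $(-1,-1)$--curves meeting at a point. A deformation of $\widehat{U}_p$ deforms this configuration; since each $(-1,-1)$--curve is rigid under \emph{small} resolution but the node where they meet can be smoothed, the key observation is that $\Def(\widehat{U}_p)$ parameterizes separating the two curves, which on $U_p$ corresponds exactly to smoothing the cusp $cA_2$ to three nodes (the intermediate node $X_i$ being the halfway stage, as in the proof of Proposition \ref{nami-risoluzione}). Thus I expect $\im(\delta_{loc,p})$ to be precisely the locus of deformations of the cusp (\ref{singolarit�}) admitting three nodes, which by Proposition \ref{3sings-effettive} is the smooth curve $C=\{\sigma^3-27\lambda=\mu=\nu=0\}$. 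Concretely I would verify this by writing the blow--up equations of $\widehat{\mathcal{A}}'_i\cap\widehat{X}_i$ from the proof of Proposition \ref{nami-risoluzione}, perturbing the parameter $t$ (equivalently the coefficient of the monomials appearing in the cusp), and checking that the versal deformation of this smooth total space $\widehat{U}_p$, pushed down by $\varphi_*$, sweeps out exactly $C$; alternatively, since $\widehat{U}_p$ is smooth with a contractible exceptional configuration, one computes $\dim H^0(R^1\phi_*\Theta_{\widehat{U}_p})$ by a local Ext/deformation-of-the-pair argument (cf.\ the references to \cite{Wahl76}, \cite{Kollar-Mori92} in the remark following (\ref{Friedman-locale})) to get dimension $1$, and then matches the tangent direction with $(0,0,0,1)=\partial_\sigma$, which is exactly the tangent vector to $C$ at the origin singled out in Proposition \ref{3sings-effettive}.

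Once $\im(\delta_{loc,p})=C$ is established, the three numbered consequences are immediate. Part (a): $C$ is a smooth curve, so $\dimdef(\widehat{U}_p)=\dim C=1$ (using that $\Def(\widehat{U}_p)$ injects onto $C$ via $\delta_{loc,p}$). Part (b): by Proposition \ref{def-singolari}, $\im(\lambda_p)=S=\{\sigma=0\}$, while $C$ meets $S$ transversally only at the origin by Proposition \ref{3sings-effettive}; hence $\im(\lambda_p)\cap\im(\delta_{loc,p})=S\cap C=0$. Part (c): chase the commutative square (\ref{Def-localizzazione}); for $v\in\im(\lambda_{E_p})\subset\Def(\widehat{U}_p)$ we have $\delta_{loc,p}(v)=\lambda_p(\delta_1(w))\in\im(\lambda_p)$ for the corresponding $w\in\Def(\widehat X)$, but $\delta_{loc,p}(v)$ also lies in $\im(\delta_{loc,p})$, so by (b) it is $0$, and injectivity of $\delta_{loc,p}$ forces $v=0$.

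The main obstacle is the first, geometric step: justifying rigorously that $\im(\delta_{loc,p})$ equals the three-nodes locus $C$ rather than something larger or differently positioned. The subtlety is that a priori $\Def(\widehat{U}_p)$ could be positive-dimensional in directions that push down to non-equisingular deformations of $U_p$ outside $C$; ruling this out requires knowing that every deformation of the smooth pair $\widehat{U}_p\to U_p$ that is non-trivial on $\widehat{U}_p$ must smooth the node of $X_i$ (hence land in $C$), and that the two $(-1,-1)$--curves themselves do not contribute extra moduli — i.e.\ a vanishing statement for the relevant $R^1\phi_*\Theta$ beyond the one-dimensional contribution. I would handle this by the explicit local model: the total space $\widehat{\mathcal{A}}'_i\cap\widehat{X}_i$ of the resolution is cut out by three equations in $\mathcal A'\times\P^1\times\P^1$ with one free deformation parameter coming from moving $t_0$ through the zero of $B$, and a direct Jacobian/flatness check shows the induced family on $\widehat{U}_p$ is versal of dimension $1$, whose image downstairs is parameterized precisely by the three-node configuration (\ref{3nodi}), i.e.\ by $C$.
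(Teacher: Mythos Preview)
Your deduction of (a), (b), (c) from the main claim $\im(\delta_{loc,p})=C$ is correct and matches the paper's argument exactly. The issue is entirely in how you propose to establish that main claim.

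The paper's route is different from both of your suggestions, and more direct. The key observation is that the small resolution $\varphi:\widehat{U}_p\to U_p$ was constructed (Proposition \ref{nami-risoluzione}) by successively blowing up the planes $\{X-(-1)^iU=Y-\epsilon^iV=0\}$, i.e.\ along the linear factors in the factorization
\[
(X-U)(X+U)=(Y-V)(Y-\epsilon V)(Y-\epsilon^2 V).
\]
Hence a deformation of $U_p$ lifts to $\widehat{U}_p$ exactly when its defining equation still factors in this shape. Writing the most general such deformation
\[
(X-U+\xi)(X+U+\upsilon)=(Y-V+\alpha)(Y-\epsilon V+\beta)(Y-\epsilon^2 V+\gamma),
\]
eliminating $\xi,\upsilon$ by a translation in $X,U$, and expanding gives an explicit $3$--parameter family $F_{\mathbf a}$ over $A=\C^3$. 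A direct computation shows that $A$ contains a plane $\pi$ of trivial deformations, that the complement parameterizes deformations to three distinct nodes, and that the classifying map $g:(A,0)\to(\T^1,0)$ has image precisely the curve $C$ of Proposition \ref{3sings-effettive}. This is how $\im(\delta_{loc,p})=C$ is obtained.

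Your first proposed method --- ``perturbing the parameter $t$'' --- is problematic. In the local model (\ref{eq.localeW}) the variable $t$ is a coordinate on the ambient $\C^5$, not a deformation parameter; and if you instead mean perturbing $t_0$ (i.e.\ moving the zero of $B$), that is a \emph{global} deformation of $Y$, hence of $X$, whose localization lands in $\im(\lambda_p)=S=\{\sigma=0\}$ by Proposition \ref{def-singolari}, not in $C$. So this route cannot recover the $\sigma$--direction you want. Your second proposed method (a pure dimension count of $H^0(R^1\phi_*\Theta_{\widehat{U}_p})$ followed by matching the tangent direction) is plausible in principle but you give no mechanism for either step; in particular, knowing the two components are $(-1,-1)$--curves does not by itself yield $\dim=1$, since the contribution comes from the incidence point, and identifying the tangent direction as $(0,0,0,1)$ still requires an explicit computation equivalent to the paper's. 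The factorization argument bypasses all of this by giving the deformation family of $\widehat{U}_p$ in closed form.
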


\begin{proof} By the construction of the resolution
$\widehat{X}\stackrel{\phi}{\longrightarrow} X$, $\im(\delta_{loc,p})\subset\T^1$ parameterizes all the deformations of $U_p$ induced by deformations of $\widehat{U}_p$. Then a fiber of the versal deformation family of $U_p$ over a point in $\im(\delta_{loc,p})\subset\T^1$ has local equation respecting the factorization (\ref{fatt.cusp}) of the local equation (\ref{cA2}). A general deformation respecting such a
factorization can be written as follows
\begin{equation}\label{cusp-def}
    (X-U+\xi)(X+U+\upsilon)=(Y-V+\alpha)(Y-\epsilon V+\beta)(Y-\epsilon^2 V+\gamma)
\end{equation}
for $(\xi,\upsilon,\alpha,\beta,\gamma)\in \C^5$. By means of the
translation
$$X\longmapsto X-\frac{\xi+\upsilon}{2}\quad,\quad U\longmapsto U+\frac{\xi-\upsilon}{2}$$
the left part of (\ref{cusp-def}) becomes $X^2-U^2$. After some calculation on the right part, (\ref{cusp-def}) can then be rewritten as follows
\begin{eqnarray}\label{Fa}
    F_{\mathbf{a}}&:=& F -
    (\alpha+\beta+\gamma)Y^2
    -(\alpha+\epsilon^2\beta+\epsilon\gamma)V^2 -
    (\alpha+\epsilon\beta+\epsilon^2\gamma)YV \\
\nonumber
    &&-(\alpha\gamma+\alpha\beta+\beta\gamma)Y +
    (\beta\gamma+\epsilon\alpha\gamma+\epsilon^2\alpha\beta)V -
    \alpha\beta\gamma=0
\end{eqnarray}
where $F:=X^2 - U^2 -Y^3 + V^3$ and
$\mathbf{a}:=(\alpha,\beta,\gamma)\in A\cong\C^3$. Consider the
 deformation
$f:\mathcal{U}\longrightarrow (A,0)$ defined by setting
$$\forall\ \mathbf{a}\in A\quad\mathcal{U}_{\mathbf{a}}:=f^{-1}(\mathbf{a})=\{F_{\mathbf{a}}=0\}\ .$$
The following facts
then occur:
\begin{enumerate}
    \item \emph{the fibre $\mathcal{U}_{\mathbf{a}}$
    is isomorphic to the central fibre $\mathcal{U}_0$ if and only if
    $\mathbf{a}$ is a point of the plane $\pi=\{\alpha+\epsilon\beta+\epsilon^2\gamma=0\}\subset A$;
    in particular $\mathcal{U}|_{\pi}\longrightarrow \pi$ is a trivial deformation;}
    \item \emph{the open subset $V:=A\setminus\pi$ is the base
    of a deformation of the cusp $\mathcal{U}_0=\{F=0\}$ to 3
    distinct nodes;}
    \item \emph{there exists a morphism of germs of complex spaces $g:(A,0)\longrightarrow (\T^1,0)$ to the Kuranishi space
    $\T^1$ described in (\ref{T1}), such
    that $\im g$ turns out to be precisely the plane smooth
    curve $C$ defined in Proposition \ref{3sings-effettive} and
    parameterizing the deformation of the cusp
    (\ref{singolarità}) to three distinct nodes.}
\end{enumerate}
Let us postpone the proof of these facts to observe that fact (3)
means that the deformation
$\mathcal{U}\to A$ is the
\emph{pull--back by $g$} of the versal deformation
$\mathcal{V}\to \T^1$ i.e.
$\mathcal{U}=A\times_{\T^1}\mathcal{V}$. Then $C=\im(\delta_{loc,p})$
proving the first part of the statement. Part (a) of the statement then follows by
recalling that $\delta_{loc,p}$ is injective. Moreover Propositions
\ref{def-singolari} and \ref{3sings-effettive} allow one to prove
part (b). Finally part (c) follows by (b), the injectivity of
$\delta_{loc,p}$ and the commutativity of diagram
(\ref{Def-localizzazione}).

\noindent Let us then prove facts (1), (2) and (3) stated above.

\halfline (1)\ ,\ (2)\ :\quad this facts are obtained analyzing
the common solutions of
\[
    F_{\mathbf{a}}= \partial_X F_{\mathbf{a}}= \partial_Y F_{\mathbf{a}}=
    \partial_U F_{\mathbf{a}}=\partial_V F_{\mathbf{a}}=0\ .
\]
Since $\partial_X F_{\mathbf{a}} = 2X$ and $\partial_U
F_{\mathbf{a}} = 2U$, we can immediately reduce to look for the
common solutions $(0,Y,0,V)$ of
\begin{equation}\label{sistema}
    F_{\mathbf{a}}(0,Y,0,V)= \partial_Y F_{\mathbf{a}} =
    \partial_V F_{\mathbf{a}}=0\ .
\end{equation}
After some calculations one finds that these common solutions are given by
\begin{eqnarray}\label{3nodi-def}
  p_1 = &\left(0,\frac{\varepsilon\beta-\gamma}
  {1-\epsilon},0,\frac{\beta -\gamma}{\varepsilon (1-\varepsilon)}\right)& \\
\nonumber
  p_2 = &\left(0,\frac{\varepsilon^2\alpha-\gamma}
  {1-\epsilon^2},0,\frac{\alpha -\gamma}{1-\varepsilon^2}\right)&  \\
\nonumber
  p_3 = &\left(0,\frac{\varepsilon\alpha-\beta}
  {1-\epsilon},0,\frac{\alpha -\beta}{1-\varepsilon}\right)&
\end{eqnarray}
which have to be necessarily distinct since
\begin{eqnarray*}
  \frac{\beta -\gamma}{\varepsilon (1-\varepsilon)}=\frac{\alpha -\gamma}{1-\varepsilon^2} &\Longleftrightarrow&
  \frac{\alpha -\gamma}{1-\varepsilon^2}= \frac{\alpha -\beta}{1-\varepsilon}\ \Longleftrightarrow\
  \frac{\alpha -\beta}{1-\varepsilon}=\frac{\beta -\gamma}{\varepsilon (1-\varepsilon)}\\
   &\Longleftrightarrow& \alpha+\epsilon\beta+\epsilon^2\gamma=0\
   .
\end{eqnarray*}
On the other hand, if $\alpha+\epsilon\beta+\epsilon^2\gamma=0$
then we get the unique singular point $p_1=p_2=p_3$
which is still a threefold cusp.

\halfline (3)\ :\quad Look at the definition (\ref{Fa}) of
$F_{\mathbf{a}}$ and construct $g$ as a composition $g=i\circ p$
where
\begin{itemize}
    \item \emph{$p:(A,0)\cong(\C^3,0)\longrightarrow (A,0)\cong(\C^3,0)$ is a linear map of rank 1 whose
kernel is the plane $\pi\subset A$ defined in (1)},
    \item \emph{$i:(A,0)\cong(\C^3,0)\to(\T^1,0)\cong(\C^4,0)$ is the map
    $(\alpha,\beta,\gamma)\mapsto(\lambda,\mu,-\nu,\sigma)$ given by
\begin{eqnarray*}
    \lambda=-\alpha\beta\gamma\ &,&\
    \mu=-\alpha\gamma-\alpha\beta-\beta\gamma\ ,\\
    \nu=-\beta\gamma-\epsilon\alpha\gamma-\epsilon^2\alpha\beta\
    &,&\ \sigma=-\alpha-\epsilon\beta-\epsilon^2\gamma\ ;
\end{eqnarray*}
    then, by (2) and Proposition \ref{3sings-effettive}, necessarily
    $\im i =C$ and $i|_{\im p}$ is the rational
    parameterization $\Lambda_0$ given in (\ref{soluzioni})}.
\end{itemize}
The linear map $p$ has to annihilate the coefficients of $Y^2$ and
$V^2$ in (\ref{Fa}) i.e.
\[
    \alpha+\beta+\gamma=\alpha+\epsilon^2\beta+\epsilon\gamma=0\ .
\]
Then we get the following conditions
\[
\im p
=\left\langle(\epsilon,1,\epsilon^2)\right\rangle_{\C}\subset A\
, \ \ker
p=\pi=\left\langle(-\epsilon,1,0),(-\epsilon^2,0,1)\right\rangle_{\C}\subset A
\]
which determine $p$, up to a multiplicative constant $k\in\C$, as
the linear map
represented by the rank 1 matrix\quad $k\cdot\left(%
\begin{array}{ccc}
  1 & \epsilon & \epsilon^2 \\
  \epsilon^2 & 1 & \epsilon \\
  \epsilon & \epsilon^2 & 1 \\
\end{array}%
\right)$.\quad Then
\[
    p(\mathbf{a})=k(\epsilon^2\alpha+\beta+\epsilon\gamma)
    \cdot\left(\epsilon,1,\epsilon^2\right)
\]
and
\[
    g(\mathbf{a})=i\circ p\ (\mathbf{a}) =
    \left(-k^3(\alpha+\epsilon\beta+\epsilon^2\gamma)^3
    ,0,0,-3k(\alpha+\epsilon\beta+\epsilon^2\gamma)\right)
\]
which satisfies equations $\sigma^3-27\lambda=\mu=\nu=0$ of
$C\subset \T^1$.
\end{proof}

\begin{remark}\label{rem:nodef} Propositions \ref{def-singolari} and
\ref{3sings-effettive} and Theorem \ref{immagine0} give a detailed
and revised version of what observed by Y.Namikawa in \cite{N}
Examples 1.10 and 1.11 and Remark 2.8. In fact point (c) of
Theorem \ref{immagine0} proves the following
\end{remark}

\begin{theorem}\label{thm:nodef} In the notation introduced above, every global deformation of
the small resolution $\widehat{X}$ induces only trivial local
deformations of a neighborhood of the exceptional fibre
$\phi^{-1}(p)$ over a cusp $p\in\Sing(X)$.
\end{theorem}

\section{A small and non-simple geometric transition}\label{s:gt}
Finally we propose an example of a small geometric transition which is not a simple gt i.e. it is not a \emph{deformation of a conifold transition}, as defined in the following. This example has been already sketched in \S 9.2 of \cite{Rdef}. Thanks to the detailed analysis of the Kuranishi space of a Namikawa fiber product presented above, all the following statements are now completely proved.

Let us first of all recall the main definitions.

\begin{definition}[see \cite{R1} and references therein]\label{gtdef}
Let $\widehat{X}$ be a \cy threefold and $\phi :
\widehat{X}\longrightarrow X$ be a \emph{birational contraction}
onto a \emph{normal} variety. Assume that there exists a \emph{\cy
smoothing} $\widetilde{X}$ of $X$. Then the process of going from
$\widehat{X}$ to $\widetilde{X}$ is called a \emph{geometric
transition} (for short \emph{transition} or \emph{gt}) and
denoted either by $T(\widehat{X},X,\widetilde{X})$ or by the diagram
\begin{equation}\label{small gt}
\xymatrix@1{\widehat{X}\ar@/_1pc/ @{.>}[rr]_{T}\ar[r]^{\phi}&
                X\ar@{<~>}[r]&\widetilde{X}}
\end{equation}
A gt $T(\widehat{X},X,\widetilde{X})$ is called \emph{conifold}
if $X$ admits only \emph{ordinary double points} (nodes or
o.d.p.'s) as singularities. Moreover a gt $T(\widehat{X},X,\widetilde{X})$
will be called \emph{small} if $\codim_{\widehat{X}} \Exc (\phi)
>1$, where $\Exc (\phi)$ denotes the exceptional locus of $\phi$.
\end{definition}

The most well known example of a gt is given by a generic
quintic threefold $X\subset\P^4$ containing a plane. One can check
that $\Sing (X)$ is composed by 16 nodes.
Look at the strict transform of $X$, in the blow--up of $\P^4$
along the contained plane, to get the resolution $\widehat{X}$,
while a generic quintic threefold in $\P^4$ gives the smoothing
$\widetilde{X}$. Due to the particular nature of $\Sing (X)$ the
gt $T(\widehat{X},X,\widetilde{X})$ is actually an example of a
conifold transition and hence of a small gt, too.

\begin{remark}
Let $\gt$ be a small gt. Then $\Sing(X)$ is composed at most by
terminal singularities of index 1 which turns out to be
isolated hypersurface singularities (actually of
\emph{compound Du Val type} \cite{Reid80}, \cite{Reid83}). The exceptional locus $\Exc(\phi: \widehat{X}\rightarrow X)$ is then composed by a finite number of trees of transversally intersecting rational curves, dually represented by ADE Dynkin diagrams \cite{Laufer81}, \cite{Pinkham81},
\cite{Morrison85}, \cite{Friedman86}.

\noindent Moreover
\cite[Thm.~A]{Namikawa94} allows us to conclude
that $\Def(X)$ is smooth.
Therefore
\begin{equation}\label{kuranishi numero bis}
    \dimdef(X)=\dim_{\C}\T^1_X\ .
\end{equation}
Moreover the Leray spectral sequence of the sheaf
$\phi_*\Theta_{\widehat{X}}$ gives
\[
    h^0(\Theta_X) = h^0(\phi_*\Theta_{\widehat{X}}) = h^0(\Theta_{\widehat{X}})
    = h^{2,0}(\widehat{X}) = 0
\]
where the first equality on the left is a consequence of the
isomorphism $\Theta_X\cong\phi_*\Theta_{\widehat{X}}$ (see
\cite{Friedman86} Lemma (3.1)) and the last equality on the right
is due to the \cy condition for $\widehat{X}$. Then Theorem
\ref{universality thm} and (\ref{T0}) allow to conclude that $X$
\emph{admits a unique smoothing component giving rise to a universal effective family of \cy deformations}.
\end{remark}

In the case of small geometric transitions we may then set the following

\begin{definition}\label{def:def}
Two small geometric transitions
$$T_1(\widehat{X}_1,X_1,\widetilde{X}_1)\ ,\ T_2(\widehat{X}_2,X_2,\widetilde{X}_2)$$
are \emph{direct deformation equivalent} (i.e. \emph{direct def-equivalent}) if the associated birational morphisms $\phi_i: \widehat{X}_i\rightarrow X_i\ ,\ i=1,2$\,, are deformations of each other as defined in \S\,\ref{ssez:morf-def}.

\noindent The equivalence relation of small geometric transitions generated by direct def-equivalence is called \emph{def-equivalence} (or \emph{deformation type}) of small geometric transitions.

\noindent In particular a small gt $T(\widehat{X},X,\widetilde{X})$ is called \emph{simple} if it is def-equivalent to a conifold transition.
\end{definition}

Actually def-equivalence of geometric transitions is a more complicated concept which reduces to the given Definition \ref{def:def} in the case of small geometric transitions. The interested reader is referred to \cite{Rdef} \S 8.1 for any further detail.

A direct consequence of Theorem \ref{immagine0} is then the following

\begin{corollary}\label{cor:nodef} There exist small geometric transitions which are not def-e\-qui\-va\-lent to a conifold transition.
\end{corollary}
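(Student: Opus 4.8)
The plan is to exhibit the Namikawa cuspidal fiber product $X=Y\times_{\P^1}Y$, together with one of its six small resolutions $\phi:\widehat X\to X$ constructed in Proposition \ref{nami-risoluzione}, as a witness for the statement. First I would observe that $\widehat X$ is a \cy threefold, that $\phi$ is a birational contraction onto the normal threefold $X$ (normality being clear from the explicit equations (\ref{Nami})), and that $X$ admits a \cy smoothing: indeed $X$ is a complete intersection of two sections in the toric ambient $\P(\mathcal E)\times\P(\mathcal E)\times\P^1$, and a generic deformation of the defining data (e.g.\ replacing the homomorphism $(0,B)$ of (\ref{W-Nami}) by a generic $(A,B)$) smooths $X$ while keeping the canonical bundle trivial; alternatively one uses the smoothness of $\Def(X)$ recalled in the Remark after Proposition \ref{prop:localizzazione-diff} together with Proposition \ref{preliminari}(1). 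Since $\Exc(\phi)$ consists of six disjoint couples of $(-1,-1)$--curves meeting in a point, it has codimension $2$, so $T(\widehat X,X,\widetilde X)$ is a \emph{small} g.t.\ in the sense of Definition \ref{g.t.def}.

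Next I would argue by contradiction. Suppose $T(\widehat X,X,\widetilde X)$ were def-equivalent to a conifold transition $T'(\widehat X',X',\widetilde X')$, i.e.\ by Definition \ref{def:def} the two contractions $\phi$ and $\phi'$ are deformations of each other as morphisms. Localizing such a morphism deformation near the exceptional fibre over a cusp $p\in\Sing(X)$ produces a deformation of the pair $(\widehat U_p,U_p)$, hence in particular a deformation of $\widehat U_p=\phi^{-1}(U_p)$ whose induced deformation of $U_p$ must — on the conifold side — split the germ $U_p$, which is the $cA_2$ cusp (\ref{cA2}), into a family with only nodes; more precisely the nearby fibre of $\widehat U_{p}$ must become the resolution of a union of ordinary double points. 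But Theorem \ref{immagine0} describes $\im(\delta_{loc,p})$ exactly: it is the plane smooth curve $C=\{\sigma^3-27\lambda=\mu=\nu=0\}\subset\T^1$, and moreover part (c) gives $\im(\lambda_{E_p})=0$, i.e.\ every global deformation of $\widehat X$ induces only the \emph{trivial} local deformation of a neighbourhood of $\phi^{-1}(p)$ (Remark \ref{rem:nodef}). A morphism deformation of $\phi$ restricts to a global deformation of $\widehat X$, so it cannot move $\phi^{-1}(p)$ at all — in particular it cannot turn the couple of $(-1,-1)$--curves meeting at a point into the disjoint union of $(-1,-1)$--curves that resolve nodes. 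This contradicts the assumption that the deformed morphism is (isomorphic to) the small contraction $\phi'$ of a conifold transition, whose exceptional fibres are single $(-1,-1)$--curves over isolated nodes.

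The one point that needs a little care, and which I expect to be the main obstacle, is the bookkeeping translating ``$\phi$ and $\phi'$ are deformations of each other as morphisms'' into a statement purely about the local deformation functors appearing in diagram (\ref{Def-localizzazione}). Concretely, a morphism deformation $\Phi:\widehat{\mathcal X}\to\mathcal X$ over $(\Sigma,o)$ of $\phi$ has, over each $p\in\Sing(X)$, a localization $\widehat{\mathcal U}_p\to\mathcal U_p$ which is classified by a map $(\Sigma,o)\to\Def(\widehat U_p)$, compatible via $\delta_{loc,p}$ with the classifying map to $\Def(U_p)$; composing with the Kodaira--Spencer maps one lands inside $\im(\lambda_{E_p})$ and $\im(\delta_{loc,p})$ respectively. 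Then $\im(\lambda_{E_p})=0$ forces the local deformation of $\widehat U_p$ induced by $\Phi$ to be trivial to first order, hence (since $\Def(\widehat U_p)$ is smooth by Theorem \ref{immagine0}(a), being a smooth curve germ, in fact the obstruction issue is moot because triviality to first order along a smooth base already kills it) trivial. So the exceptional fibre over $p$ is rigid under $\Phi$, contradicting conifoldness of the target $X'$. I would cite \cite[Def.~1.1]{Ran89}, \cite[(11.3-4)]{Kollar-Mori92} and \cite[(3.4)]{Friedman86} for the compatibility of these localized deformation functors, and refer the reader to \cite{Rdef} \S 7.1 for the full verification; in the present note it suffices to record the conclusion.

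\begin{proof}
Let $X=Y\times_{\P^1}Y$ be the Namikawa cuspidal fiber product associated with the bundle homomorphism (\ref{W-Nami}), for generic $B\in H^0(\mathcal O_{\P^1}(6))$, and let $\phi:\widehat X\to X$ be one of the six small resolutions of Proposition \ref{nami-risoluzione}. Then $\widehat X$ is a \cy threefold, $X$ is a normal threefold whose singular locus consists of six cusps of type (\ref{cA2}) (Proposition \ref{prop:cuspidi}), and $\phi$ is a birational contraction with $\Exc(\phi)$ a disjoint union of six couples of $(-1,-1)$--curves meeting at one point, so $\codim_{\widehat X}\Exc(\phi)=2$. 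Moreover $X$ admits a \cy smoothing $\widetilde X$: a generic deformation of the defining equations (\ref{Nami}) (e.g.\ replacing $(0,B)$ by a generic $(A,B)$, cf.\ Proposition \ref{preliminari}(1)) smooths $X$ while preserving the triviality of the canonical bundle. Hence $T(\widehat X,X,\widetilde X)$ is a small geometric transition in the sense of Definition \ref{g.t.def}.

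Assume, for contradiction, that $T(\widehat X,X,\widetilde X)$ is def-equivalent to a conifold transition $T'(\widehat X',X',\widetilde X')$. By Definition \ref{def:def} the contractions $\phi:\widehat X\to X$ and $\phi':\widehat X'\to X'$ are deformations of each other as morphisms in the sense of \cite[Def.~1.1]{Ran89}; in particular there is a morphism deformation $\Phi:\widehat{\mathcal X}\to\mathcal X$ of $\phi$ over a germ $(\Sigma,o)$ specializing, over some $s\in\Sigma$, to $\phi'$. For a cusp $p\in\Sing(X)$ set $U_p=\Spec\mathcal O_{F,p}$ and $\widehat U_p=\phi^{-1}(U_p)$ as in (\ref{localizzazione}). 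Localizing $\Phi$ near $\phi^{-1}(p)$ yields a deformation of $\widehat U_p$ together with a compatible deformation of $U_p$, classified by maps from $(\Sigma,o)$ to $\Def(\widehat U_p)$ and to $\Def(U_p)$ that are intertwined by $\delta_{loc,p}$; the associated Kodaira--Spencer maps land in $\im(\lambda_{E_p})$ and $\im(\delta_{loc,p})$ of diagram (\ref{Def-localizzazione}) respectively (compatibility of these localized functors: \cite[(3.4)]{Friedman86}, \cite[(11.3-4)]{Kollar-Mori92}; see also \cite{Rdef} \S 7.1). By Theorem \ref{immagine0}(c), $\im(\lambda_{E_p})=0$, so the deformation of $\widehat U_p$ induced by $\Phi$ is trivial to first order; since $\Def(\widehat U_p)$ is a smooth curve germ by Theorem \ref{immagine0}(a), this deformation is trivial. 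Thus the exceptional fibre $\phi^{-1}(p)$ is unchanged along $\Phi$: it remains a couple of $(-1,-1)$--curves meeting at a point.

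On the other hand, $\phi'$ is the exceptional contraction of a conifold transition, so $X'$ has only ordinary double points and each exceptional fibre of $\phi'$ is a single $(-1,-1)$--curve over an isolated node. In particular, over the point of $\Sing(X')$ to which $p$ specializes, the fibre of $\Phi_s=\phi'$ is a single smooth rational curve, contradicting the rigidity of $\phi^{-1}(p)$ established above. This contradiction shows that $T(\widehat X,X,\widetilde X)$ is not def-equivalent to any conifold transition, proving the corollary.
\end{proof}
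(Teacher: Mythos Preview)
Your approach is exactly the paper's: exhibit the Namikawa cuspidal fiber product with one of its small resolutions $\phi:\widehat X\to X$ from Proposition~\ref{nami-risoluzione}, a \cy smoothing $\widetilde X$ (the paper takes $\widetilde X=Y'\times_{\P^1}Y''$ for generic rational elliptic surfaces, which agrees with your ``generic deformation of the defining equations''), and then invoke Theorem~\ref{immagine0} via Remark~\ref{rem:nodef} to block any def-equivalence with a conifold contraction. The paper is in fact far terser than you are and explicitly defers the bookkeeping you worry about to \cite{Rdef}~\S7.1, so your added detail is welcome.

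There is, however, one logical slip worth cleaning up. In your contradiction argument you pass to Kodaira--Spencer maps, conclude that the induced local deformation of $\widehat U_p$ is ``trivial to first order'', and then write ``since $\Def(\widehat U_p)$ is a smooth curve germ by Theorem~\ref{immagine0}(a), this deformation is trivial''. That inference is not valid: a classifying map $(\Sigma,o)\to\Def(\widehat U_p)$ with vanishing differential at $o$ need not be constant, regardless of smoothness of the target (think of $t\mapsto t^2$). The correct argument avoids the first-order detour entirely. In the paper's reading of diagram~(\ref{Def-localizzazione}) the maps $\lambda_{E_p},\lambda_p,\delta_{loc,p},\delta_1$ are maps of \emph{germs} (the Remark after Proposition~\ref{prop:localizzazione-diff} says explicitly that they are compatible with the natural transformations of deformation functors), and Theorem~\ref{immagine0}(b), combined with Propositions~\ref{def-singolari} and~\ref{3sings-effettive}, gives $\im(\lambda_p)\cap\im(\delta_{loc,p})=S\cap C=\{0\}$ as germs in $\T^1$. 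Commutativity then forces the restriction map $\Def(\widehat X)\to\Def(\widehat U_p)$ itself to be the zero germ map (this is exactly how part~(c) is deduced in the proof of Theorem~\ref{immagine0}), so any deformation of $\widehat U_p$ coming from a global deformation of $\widehat X$ is trivial on the nose, not merely to first order. Replace your sentence by a direct appeal to~(b)--(c) read at the germ level, and the proof goes through.
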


\begin{proof}
Consider the Namikawa fiber product $X:=Y\times_{\P^1}Y$, a smooth resolution $\phi:\widehat{X}\rightarrow X$, as defined in Proposition \ref{nami-risoluzione} and a fiber product of generic rational elliptic surfaces $\widetilde{X}:=Y'\times_{\P^1}Y''$. Then $T(\widehat{X},X,\widetilde{X})$ is a small gt which can't be direct def-equivalent to a conifold transition for what observed in Remark \ref{rem:nodef}.

\noindent Assume now that $T(\widehat{X},X,\widetilde{X})$ is def-equivalent to a conifold transition. This means that there exists a finite chain of morphism deformations connecting the resolution  $\phi:\widehat{X}\rightarrow X$ with a conifold resolution. Hence there exists at least one of those morphism deformations locally inducing a non-trivial deformation of the exceptional locus, so violating condition (c) of Theorem \ref{immagine0}.
\end{proof}

\begin{remark}[The Friedman diagram of the Namikawa fiber product]\label{rem:Fr-vs-Nami} As a final result let us give a complete account of the Friedman diagram (\ref{Friedman-diagramma}) in the case of the small resolution $\phi:\widehat{X}\to X$ of a cuspidal Namikawa fiber product $X=Y\times_{\P^1}Y$.

\noindent Let us start by computing the Kuranishi number $\dimdef(X)$, by an easy moduli computation. In fact the moduli of the elliptic surface $Y$ are 8 since they are given by the moduli of an elliptic pencil in
$\P^2$. Moreover the 6 cuspidal fibers are parameterized by the
roots in $\P^1$ of a general element in
$H^0(\mathcal{O}_{\P^1}(6))$ up to the action of the projective
group $\P\GL(2)$. Then
\[
    \dimdef(X)=2\cdot\dimdef(Y)+h^0(\mathcal{O}_{\P^1}(6))-\dim
    \GL(2,\C)= 2\cdot 8 + 7 - 4 = 19\ .
\]
Recalling the existence of the small geometric transition $T(\widehat{X},X,\widetilde{X})$, described in the proof of Corollary \ref{cor:nodef}, it turns out that
\begin{equation*}
    \dimdef(X)=\dimdef(\widetilde{X})=h^{1,2}(\widetilde{X}) =19 =h^{1,1}(\widetilde{X})
\end{equation*}
since $\widetilde{X}$ is a \cy 3--fold with $\chi(X)=0$ (\cite{S} \S2). Recalling that
\begin{itemize}
  \item the small resolution $\phi:\widehat{X}\to X$ is obtained as a composition of 2 blow-ups,
  \item the exceptional locus $\Exc(\phi)$ has 12 irreducible components,
  \item every cusp of $X$ has Milnor (and Tyurina) number 4,
\end{itemize}
then \cy conditions on $\widehat{X}$ and Theorem 7 in \cite{hom-type} gives that
\begin{eqnarray*}
\dimdef(\widehat{X}) = h^{1}(\Theta_{\widehat{X}})= h^{1,2}(\widehat{X}) &=& h^{1,2}(\widetilde{X}) - 16 = 3\\
\dim\T^2_{\widehat{X}} = h^{2}(\Theta_{\widehat{X}}) = h^{1,1}(\widehat{X})&=& h^{1,1}(\widetilde{X}) +2 = 21
\end{eqnarray*}
Moreover recall that:
\begin{itemize}
  \item by Proposition \ref{prop:localizzazione-diff}(1) and Theorem \ref{immagine0}(a), $h^0(R^1\phi_*\Theta_{\widehat{X}})=6$,
  \item by Theorem \ref{immagine0}(c) the localization map $\l_E$ in (\ref{Friedman-diagramma}) is trivial,
  \item by Proposition \ref{prop:localizzazione-diff}(2) and relation (\ref{T1}), $\dim T^1_X=24$,
  \item the last horizontal maps on the right in the Friedman diagram (\ref{Friedman-diagramma}) are, in this case, surjective since both related with spectral sequences having $E^{1,1}_{\infty}=E^{0,2}_{\infty}=0$.
\end{itemize}
Putting all together, the Friedman diagram (\ref{Friedman-diagramma}) becomes the following one
\begin{equation*}
    \xymatrix{0\ar[r]&\C^3\ar[r]^-{\cong}\ar@{=}[dd]&\C^3
    \ar[rr]^-{\lambda_E=0}\ar@{^{(}->}[dd]^{\delta_1}\ar[rd] &&
    \C^6\ar@{^{(}->}[r]^-{d_2}\ar@{^{(}->}[dd]^-{\delta_{loc}}&
    \C^{27}\ar@{>>}[r]\ar@{=}[dd] & \C^{21}\ar@{>>}[dd]^-{\delta_2}\ar[r]&0\\
    &&&0\ar[ru]&&&&\\
            0\ar[r]&\C^3\ar@{^{(}->}[r] & \C^{19}\ar[rr]^-{{\lambda_P}} &&
              \C^{24}\ar[r]
              &\C^{27}\ar@{>>}[r]&\C^{19}\ar[r]&0}
\end{equation*}
In particular notice that
$$h^1(X,\Theta_X)=h^1(\widehat{X},\Theta_{\widehat{X}})=3$$ contradicting the sufficient condition (13) in Proposition 32 of \cite{Rdef} which has to be satisfied by a simple gt. This is consistent with the previous Corollary \ref{cor:nodef}.
\end{remark}

\halfline
\noindent\textbf{Acknowledgements}
A first draft of the present paper was written on a
 visit to the Dipartimento di Matematica e Applicazioni
of the Universit\`{a} di Milano Bicocca and the Department of
Mathematics of the
University of Pennsylvania. I would like to thank the Faculties of both
Departments for warm hospitality and in particular F.~Magri,
R.~Paoletti and S.~Terracini from the first Department and
R.~Donagi, A.~Grassi, T.~Pantev and J.~Shaneson from the second
Department. Special thanks are due to A.~Grassi for beautiful and
stimulating conversations. I am also greatly indebted to
B.~van~Geemen for useful suggestions.


\begin{thebibliography}{}

\bibitem{ACG}
Arbarello E., Cornalba M. and Griffiths P.A. \emph{Geometry of algebraic curves, II} Grundlehren der Mathematischen Wissenschaften, \textbf{268}, Springer, Heidelberg (2011).

\bibitem{BPvdV84}
Barth W., Peters C. and Van de Ven A. \emph{Compact complex
surfaces} vol.~\textbf{4} E.M.G, Springer--Verlag (1984)

\bibitem{Bogomolov78}
Bogomolov F. ``Hamiltonian \ka manifolds"
\emph{Dokl.Akad.Nauk.SSSR} \textbf{243/5}, 1101--1104 (1978).

\bibitem{CGH90}
Candelas P., Green P.S. and H\"{u}bsch T. ``Rolling among \cy
vacua" \emph{Nucl. Phys.} \textbf{B330}, 49--102 (1990).

\bibitem{Clemens}
Clemens C.H.  ``Double Solids'' \emph{Adv. in Math.}  \textbf{47}, 107--230
(1983).

\bibitem{Douady74}
Douady A. ``Le probl\`eme des modules locaux pour les espaces
C--analytiques compacts" \emph{Ann. scient. \'Ec. Norm. Sup.} 4e
s\'erie, \textbf{7} 569--602 (1974).

\bibitem{Friedman86}
Friedman R. ``Simultaneous resolution of threefold double points''
\emph{Math. Ann.} \textbf{247}, 671--689 (1986).

\bibitem{Godement}
Godement R. \emph{Topologie Alg\'ebrique et Th\'eorie des
Faisceaux}, Hermann, Paris (1958).

\bibitem{Grauert72}
Grauert H. ``\"{U}ber die Deformation isolierter Singularit\"{a}ten analytischer Mengen'' \emph{Invent. Math.} \textbf{15} (1972), 171--198.

\bibitem{Grauert74}
Grauert H. ``Der Satz von Kuranishi f\"{u}r Kompakte Komplexe
R\"{a}ume" \emph{Invent.Math.} \textbf{25}, 107--142 (1974).

\bibitem{Greuel-etal}
Greuel G.-M., Lossen C. and Shustin E. \emph{Introduction to
singularities and deformations} Springer Monographs in
Mathematics, Springer-Verlag, Berlin, 2007.

\bibitem{Gross97b}
Gross M. ``Primitive \cy threefolds'' \emph{J. Diff. Geom.}
\textbf{45}, 288--318 (1997); {\urlfont math.AG/9512002}.

\bibitem{Grothendieck57}
Grothendieck A. ``Sur quelques points d'alg\`ebre homologique"
\emph{T\^{o}hoku Math. J.} \textbf{9}, 199--221 (1957).

\bibitem{HL}
Heckman G. and Looijenga E. \emph{The moduli space of rational
elliptic surfaces}, Algebraic geometry 2000, Azumino (Hotaka),
Adv. Stud. Pure Math., \textbf{36}, 185--248 (2002).

\bibitem{Joice2000}
Joyce D.  \emph{Compact manifolds with Special Holonomy}, Oxford
Science Publications, Oxford--New York (2000).

\bibitem{Kap}
Kapustka G. and M. ``Fiber products of elliptic surfaces with
sections and associated Kummer fibrations'' \emph{Manuscripta Math.} \textbf{130}, 121--135 (2009);  {\urlfont arXiv:0802.3760
[math.AG]}.

\bibitem{Kas}
Kas A. ``Weierstrass normal forms and invariants of elliptic
surfaces" \emph{Trans. Amer. Math. Soc.} \textbf{225},
259--266 (1977).

\bibitem{K}
Kodaira K. ``On compact analytic surface. II" \emph{Ann. of Math.}
\textbf{77}(2), 563--626 (1963).

\bibitem{Kl}
Koll\'{a}r J. \emph{Minimal models of algebraic threefolds: Mori's
program} S\'{e}minaire Bourbaki, Ast\'{e}risque \textbf{177-178}, Exp. No. 712, 303--326 (1989).

\bibitem{Kollar-Mori92}
Koll\'{a}r J. and Mori S. ``Classification of three--dimensional
flips" \emph{J. Amer. Math. Soc.} \textbf{5}, 533--703 (1992).

\bibitem{Laufer81}
Laufer H. ``On $\C P^1$ as an exceptional set" in \emph{Recente
developments in several complex variables} Ann. Math. Stud.
\textbf{100}, 261--276 (1981).

\bibitem{Lichtenbaum-Schlessinger}
Lichtenbaum S. and Schlessinger M. ``On the cotangent complex of a
morphism" \emph{Trans. A.M.S.} \textbf{128}, 41--70 (1967).

\bibitem{Looijenga}
E.~J.~N.~Looijenga \emph{Isolated singular points on complete
intersections} London Mathematical Society Lecture Note Series,
\textbf{77}, Cambridge University Press, Cambridge, 1984.

\bibitem{M-80}
Miranda R. ``On the stability of pencils of cubic curves"
\emph{Amer. J. Math.} \textbf{102}(6), 1177--1202 (1980).

\bibitem{M}
Miranda R. ``The moduli of Weierstrass fibrations over $\P^{1}$"
\emph{Math. Ann.} \textbf{255}(3), 379--394 (1981).

\bibitem{MP}
Miranda R. and Persson U. ``On extremal rational elliptic
surfaces" \emph{Math. Z.} \textbf{193}(4), 537--558 (1986).

\bibitem{Morrison85}
Morrison D.R. ``The birational geometry of surfaces with rational
double points" \emph{Math. Ann.} \textbf{271}, 415--438 (1985).

\bibitem{Namikawa94}
Namikawa Y. ``On deformations of Calabi-Yau $3$-folds with
terminal singularities" \emph{Topology} \textbf{33}(3),
429--446 (1994).

\bibitem{N}
Namikawa Y. ``Stratified local moduli of Calabi-Yau threefolds"
\emph{Topology }\textbf{41}(6), 1219--1237 (2002).

\bibitem{Palamodov72}
Palamodov V. P. ``The existence of versal deformations of complex
spaces" \emph{Dokl. Akad. Nauk SSSR} \textbf{206},
538--541 (1972).

\bibitem{Palamodov}
Palamodov V. P. ``Deformations of complex spaces" \emph{Russian
Math. Surveys} \textbf{31}(3) (1976), 129--197; from russian\emph{
Uspekhi Mat. Nauk} \textbf{31}(3), 129--194 (1976).

\bibitem{Palamodov2}
Palamodov V. P. ``Deformations of complex spaces" in \emph{Several complex variables IV}, Encyclopedia of math.sciences \textbf{10}, Springer (1990).

\bibitem{Pinkham81}
Pinkham H. ``Factorization of birational maps in dimension three"
in \emph{Singularities} Proc. Symp. Pure Math. \textbf{40},
343--372 (1981).

\bibitem{Ran89}
Ran Z. ``Deformations of maps" in \emph{Algebraic curves and
projective geometry}, Ballico E. and Ciliberto C., Eds. LNM
\textbf{1389}, Springer--Verlag (1989).

\bibitem{Ran92}
Ran Z. ``Deformations of manifolds with torsion or negative
canonical bundle" \emph{J.Alg.Geom.} \textbf{1}, 279--291 (1992).

\bibitem{Reid80}
Reid M. ``Canonical 3--folds'' in \emph{Journ\'{e}es de
g\'{e}om\'{e}trie alg\'{e}brique d'Angers}, 671--689, Sijthoff \& Norddhoff
(1980).

\bibitem{Reid83}
Reid M. ``Minimal model of canonical 3--folds'' in \emph{Algebraic
varieties and analytic varieties}, 131--180, Adv. Stud. Pure Math.
\textbf{1}, North--Holland (1983)

\bibitem{Reid87}
Reid M. ``The moduli space of 3--folds with $K=0$ may neverthless
be irreducible" \emph{Math. Ann.} \textbf{287}, 329--334 (1987).

\bibitem{R1}
Rossi M. ``Geometric transitions" \emph{J. Geom. Phys.}
\textbf{56}(9), 1940--1983 (2006).

\bibitem{hom-type}
Rossi M. ``Homological type of geometric transitions" \emph{Geom. Dedicata} \textbf{151} (2011), 323--359.

\bibitem{Rdef}
Rossi M. ``Deforming geometric transition'' \emph{Int. J. Geom. Methods Mod. Phys.} \textbf{12}(5), (2015).

\bibitem{S}
Schoen C. ``On fiber products of rational elliptic surfaces with
section" \emph{Math. Z.} \textbf{197}(2), 177--199 (1988).

\bibitem{Stevens}
J.~Stevens \emph{Deformations of singularities} Lecture Notes in
Mathematics \textbf{1811}, Springer-Verlag, Berlin, 2003.

\bibitem{Strominger95}
Strominger A. ``Massless black holes and conifolds in string
theory" \emph{Nucl. Phys.} \textbf{B451}, 97--109 (1995);
{\urlfont hep-th/9504145}.

\bibitem{Tian87}
Tian G. ``Smoothness of the universal deformation space of compact
\cy manifolds and its Weil--Petersson metric" in
\emph{Mathematical aspects of string theory} (S.-T. Yau, ed.), 629--646.
World Scientific, Singapore (1987).

\bibitem{Todorov89}
Todorov A. ``The Weil-Petersson geometry of the moduli space of
$\su (n\geq 3)$ (\cy) manifolds" \emph{Comm.Math.Phys.}
\textbf{126}, 325--346 (1989).

\bibitem{Wahl76}
Wahl, J.M. ``Equisingular deformations of normal surface
singularities, I" \emph{Ann. of Math.} \textbf{104},
325--356 (1976).

\end{thebibliography}


\end{document}